\definecolor{vegasgold}{rgb}{0.77, 0.7, 0.35}
\definecolor{darkgoldenrod}{rgb}{0.72, 0.53, 0.04}
\definecolor{gold(metallic)}{rgb}{0.83, 0.69, 0.22}
\DeclareFontFamily{U}{wncy}{}
\DeclareFontShape{U}{wncy}{m}{n}{<->wncyr10}{}
\DeclareSymbolFont{mcy}{U}{wncy}{m}{n}
\DeclareMathSymbol{\Sh}{\mathord}{mcy}{"58}
\newtheorem{theorem}{Theorem}[section]
\newtheorem{lemma}[theorem]{Lemma}
\newtheorem*{ass*}{Assumption}
\newtheorem{definition}[theorem]{Definition}
\newtheorem{corollary}[theorem]{Corollary}
\newtheorem{conjecture}[theorem]{Conjecture}
\newtheorem{proposition}[theorem]{Proposition}
\newtheorem{lthm}{Theorem}
\newcommand{\cB}{\mathcal{B}}
\newcommand{\cF}{\mathcal{F}}
\newcommand{\cT}{\mathcal{T}}
\newcommand{\Z}{\mathbb{Z}}
\newcommand{\Q}{\mathbb{Q}}
\newcommand{\F}{\mathbb{F}}
\newcommand{\cC}{\mathcal{C}}
\newcommand{\cO}{\mathcal{O}}
\newcommand{\op}[1]{\operatorname{#1}}
\newcommand\mtx[4] { \left( {\begin{array}{cc}
 #1 & #2 \\
 #3 & #4 \\
 \end{array} } \right)}
\numberwithin{equation}{section}
\begin{document}

\title[Diophantine stability for elliptic curves on average]{Diophantine stability for elliptic curves on average}

\author[A.~Ray]{Anwesh Ray}
\thanks{corresponding author: Anwesh Ray, email: ar2222@cornell.edu}
\address[Ray]{Chennai Mathematical Institute, H1, SIPCOT IT Park, Kelambakkam, Siruseri, Tamil Nadu 603103, India}
\email{anwesh@cmi.ac.in}

\author[T.~Weston]{Tom Weston}
\address[Weston]{Department of Mathematics, University of Massachusetts, Amherst, MA, USA.} 
 \email{weston@math.umass.edu}

\keywords{Diophantine stability, arithmetic statistics, Hilbert's tenth problem for number rings, interactions between sieve methods and Galois theory}
\subjclass[2020]{11G05, 11R45, 11R32}

\maketitle

\begin{abstract}
Let $K$ be a number field and $\ell \geq 5$ a prime number.  
Mazur and Rubin introduced the notion of \emph{diophantine stability} for a variety $X_{/K}$ at a prime $\ell$. 
We show that there is a positive density set of elliptic curves $E_{/\mathbb{Q}}$ of rank $1$ such that $E_{/K}$ is diophantine stable at $\ell$. 
This has implications for Hilbert's Tenth Problem over $\cO_K$. 
This problem asks whether there exists an algorithm that decides in finite time whether a finite system of Diophantine equations over $\cO_K$ has a solution.
\end{abstract}

\section{Introduction}
\par Let $V$ be a variety over a number field $K$ and let $\ell$ be a prime number. Mazur and Rubin introduced the notion of diophantine stability. Given a number field extension $L/K$, the variety $V_{/K}$ is \emph{diophantine stable} in $L$ if $V(L)=V(K)$. For a prime $\ell$ it is said that $V$ is \emph{$\ell$-diophantine stable} over $K$ if for every $n\in \Z_{\geq 1}$ and finite set of primes $\Sigma$ of $K$, there are infinitely many cyclic extensions $L/K$ of degree $\ell^n$, such that 
\begin{enumerate}
    \item all primes $v\in \Sigma$ are completely split in $L$,
    \item $V(L)=V(K)$.
\end{enumerate}

\begin{theorem}[Mazur-Rubin \cite{mazur2018diophantine}, Theorem 1.2]Let $A_{/K}$ be a simple abelian variety for which all geometric endomorphisms are defined over $K$. Then, there is a set of prime numbers $S$ of positive density such that $A$ is $\ell$-diophantine stable for all $\ell\in S$.
\end{theorem}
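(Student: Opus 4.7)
The plan is to reduce diophantine stability to the non-growth of $\ell$-Selmer coranks in suitable cyclic $\ell$-power extensions. For a cyclic extension $L/K$ of $\ell$-power degree, inflation--restriction together with Kummer theory shows that, provided $A(K)[\ell]=0$, one has $A(L) = A(K)$ whenever $\corank_{\Z_\ell}\Sel_{\ell^\infty}(A/L) = \corank_{\Z_\ell}\Sel_{\ell^\infty}(A/K)$ and no new $\ell$-divisible classes appear in $\Sh(A/L)$. The whole problem thus reduces to controlling the $\ell$-Selmer corank in $L$.

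First I would fix a set $S$ of primes $\ell$ for which the mod-$\ell$ Galois representation $\bar{\rho}_\ell \colon G_K \to \Aut(A[\ell])$ has image as large as possible, given that $\operatorname{End}(A)$ acts on $A[\ell]$. Under the hypothesis that all geometric endomorphisms of $A$ are defined over $K$, the open image theorem of Serre (and its generalizations to arbitrary abelian varieties) produces such large image for a set of primes of density one. Shrinking $S$ by at most finitely many further exceptions, one may also assume $A(K)[\ell]=0$, good reduction at primes above $\ell$, and suitable irreducibility of $\bar\rho_\ell$; the resulting $S$ still has positive (in fact full) density.

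For $\ell \in S$, $n\geq 1$, and a finite set $\Sigma$ of primes of $K$, following Mazur--Rubin one constructs the required cyclic extension $L/K$ of degree $\ell^n$ via an iterated Chebotarev selection of ramified primes $q_1, \dots, q_n$, chosen so that each $\Frob_{q_i}$ lies in a carefully prescribed conjugacy class acting on $A[\ell]$. These primes play two roles: they permit the existence of a cyclic character of order $\ell^n$ ramified exactly at $\{q_1,\dots,q_n\}$ (plus auxiliary primes) and split at $\Sigma$, and they introduce local Selmer conditions whose contributions cancel in a Poitou--Tate balancing computation, forcing $\corank_{\Z_\ell}\Sel_{\ell^\infty}(A/L) = \corank_{\Z_\ell}\Sel_{\ell^\infty}(A/K)$. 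Infinitude of such $L$ follows because the Chebotarev selection can be iterated through infinitely many inequivalent prime tuples.

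The hard part will be the Chebotarev input guaranteeing the existence of primes $q$ with the prescribed Frobenius behavior and with ``useful'' local Kummer classes spanning the needed subspace of $H^1(K_q, A[\ell])$. This is where the hypothesis that all geometric endomorphisms of $A$ are defined over $K$ enters crucially: it ensures that the $\operatorname{End}(A)\otimes \F_\ell$-module structure on $A[\ell]$ is $G_K$-stable, which both constrains the image of $\bar\rho_\ell$ to a subgroup small enough for Chebotarev to hit the necessary conjugacy classes, and yields a self-dual decomposition of $A[\ell]$ on which the Poitou--Tate balancing argument can be executed without being obstructed by non-trivial endomorphism twists.
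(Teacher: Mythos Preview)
The paper does not contain a proof of this statement: it is quoted as background, attributed to Mazur and Rubin \cite{mazur2018diophantine}, and then the paper moves on to its own, different goals (density-one diophantine stability for rational elliptic curves at a \emph{fixed} prime $\ell\ge 5$). So there is no ``paper's own proof'' to compare your sketch against; anything you write here is necessarily an outline of the original Mazur--Rubin argument rather than of something in this paper.

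As for the sketch on its own terms: the overall architecture---reduce $A(L)=A(K)$ to non-growth of $\ell$-Selmer corank, then manufacture cyclic $\ell^n$-extensions by Chebotarev-selecting ramification primes whose local Selmer contributions cancel via Poitou--Tate---is indeed the skeleton of the Mazur--Rubin method. One genuine gap is your appeal to ``the open image theorem of Serre and its generalizations to arbitrary abelian varieties.'' No such theorem is available for a general simple abelian variety over a number field; even the Mumford--Tate conjecture is open. Mazur and Rubin do not use large image in that sense. What they actually need, and prove directly, is the existence for a density-one set of primes $\ell$ of specific elements in $\bar\rho_\ell(\op{G}_K)$ (analogues of the $\tau_0,\tau_1$ in Theorem~\ref{mazur rubin criterion} of the present paper) together with irreducibility and cohomological vanishing; these follow from much softer information about the $\ell$-adic representations than full open image. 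Your final paragraph also slightly misdescribes the role of the endomorphism hypothesis: its purpose is not to make the image ``small enough for Chebotarev to hit,'' but rather to guarantee that $A[\ell]$ is a simple $\F_\ell[\op{G}_K]$-module for almost all $\ell$ and that the relevant twisting data behave well, so that the Selmer parity/corank machinery applies uniformly.
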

When specialized to elliptic curves, this result has significant consequences to Hilbert's Tenth Problem for number rings.
\begin{corollary}[Mazur-Rubin \cite{mazur2018diophantine}, Corollary 1.6]
    For every prime $\ell$, there are uncountably many pairwise non-isomorphic 
totally real fields $L$ of 
algebraic numbers in $\Q_\ell$ over which the following two statements both hold:
\begin{enumerate} 
\item 
There is a diophantine definition of $\Z$ in the ring of integers $\cO_L$ of $L$. 
In particular, Hilbert's Tenth Problem has a negative answer for $\cO_L$; 
i.e., there does not exist an algorithm to determine whether a polynomial 
(in many variables) with coefficients in $\cO_L$ has a solution in $\cO_L$.
 \item 
There exists a first-order definition of the ring $\Z$ in $L$. 
The first-order theory for such fields $L$ is undecidable.
\end{enumerate}

\end{corollary}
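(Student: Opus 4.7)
The plan is to reduce both conclusions of the corollary to the existence of an elliptic curve $E/\Q$ of Mordell-Weil rank one whose rank does not grow in a suitable family of extensions. By the theorems of Shlapentokh (for diophantine definitions) and Poonen (for first-order definitions), if $E/\Q$ has rank one and $L$ is a number field with $E(L) = E(\Q)$, then $\Z$ is diophantinely definable in $\cO_L$ and first-order definable in $L$, giving both the negative answer to Hilbert's tenth problem for $\cO_L$ and undecidability of the first-order theory of $L$. Thus it suffices, for a prescribed prime $\ell$, to construct uncountably many pairwise non-isomorphic totally real algebraic subfields $L\subset\Q_\ell$ over which a fixed rank-one $E/\Q$ remains of rank one.

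First I would select a rank-one elliptic curve $E/\Q$ that is $\ell$-diophantine stable. The Mazur--Rubin theorem as stated produces, for a fixed simple abelian variety $A$, only a positive-density set of primes of stability; but by allowing $A$ to vary with $\ell$ (for instance choosing curves with surjective mod-$\ell$ representation and good reduction outside a controlled finite set), one covers each prescribed $\ell$, and the abundance of rank-one curves in any such family ensures that $E$ can be chosen of rank one.

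With such $E$ fixed, I would iterate $\ell$-diophantine stability to build the desired tower. Applying the theorem with $\Sigma=\{\ell,\infty\}$ yields infinitely many cyclic extensions $L/\Q$ of $\ell$-power degree in which $\ell$ and $\infty$ split completely and $E(L)=E(\Q)$; each such $L$ is totally real and embeds into $\Q_\ell$. Inductively I would construct an infinite family $\{L_i\}_{i\in\N}$ of such extensions, pairwise linearly disjoint over $\Q$ and ramified at pairwise disjoint finite sets of rational primes, with the further property that $E$ remains diophantine stable over each finite compositum built along the way. For each subset $I\subseteq\N$ the compositum $L_I=\prod_{i\in I}L_i$ is a totally real subfield of $\Q_\ell$, and the identity $E(L_I)=\bigcup_{J\subset I\text{ finite}}E(L_J)=E(\Q)$ propagates through the direct limit. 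The $2^{\aleph_0}$ such compositums are distinguished by their disjoint ramification, yielding uncountably many pairwise non-isomorphic $L_I$.

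The main obstacle will be the inductive step preserving diophantine stability over the growing base: stability of $E$ over $\Q$ does not automatically yield stability over an already-constructed finite compositum $L_{I_0}$. One must reinvoke the Mazur--Rubin framework with base field $L_{I_0}$ at each stage and verify that the structural hypotheses (simplicity of $E$, geometric endomorphisms defined over $L_{I_0}$) persist under scalar extension, which for an elliptic curve without complex multiplication is automatic but still requires bookkeeping to select the next $L_{i+1}$ compatibly. Once this inductive choice is arranged, the remaining verifications---total reality and embedding into $\Q_\ell$ via the split conditions, pairwise non-isomorphism of the $L_I$ via ramification invariants, and the direct-limit identity of Mordell--Weil groups---are routine.
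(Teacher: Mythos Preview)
The paper does not prove this corollary; it is quoted verbatim from Mazur--Rubin \cite{mazur2018diophantine} as background and motivation, with no accompanying argument. So there is no ``paper's own proof'' to compare against. That said, your proposal contains a genuine gap that is worth isolating, because it touches exactly the point where the present paper's contribution lives.

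The gap is that you conflate two roles of the prime $\ell$. In the corollary, $\ell$ is merely the prime at which one wants $L$ to embed in $\Q_\ell$; it has nothing to do with the prime at which diophantine stability is invoked. The Mazur--Rubin argument fixes \emph{one} rank-one elliptic curve $E/\Q$ (such curves exist unconditionally), applies their Theorem~1.2 to obtain a positive-density set $S$ of primes $p$ for which $E$ is $p$-diophantine stable, picks \emph{any} $p\in S$, and then builds cyclic $p$-power extensions with $\Sigma\supseteq\{\ell,\infty\}$. The splitting of $\ell$ and $\infty$ forces each such $L$ to be totally real and to embed in $\Q_\ell$, regardless of whether $\ell$ lies in $S$. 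Your proposal instead demands, for each prescribed $\ell$, a rank-one curve that is $\ell$-diophantine stable, and you wave at ``varying $A$ with $\ell$'' and ``the abundance of rank-one curves'' to produce one. But producing, unconditionally, a rank-one curve that is $\ell$-stable for a \emph{given} $\ell$ is precisely what is not available from \cite{mazur2018diophantine}; indeed, Theorem~\ref{main thm pos density} of the present paper establishes this only under the blanket hypothesis that all $\Sh(E/\Q)$ are finite. So your reduction step is circular relative to the literature: it assumes what the present paper proves conditionally, in order to recover a corollary that Mazur--Rubin obtained unconditionally by the simpler decoupling above.

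Once you decouple the two primes, the rest of your outline (iterating stability over growing base fields, using disjoint ramification to distinguish the $2^{\aleph_0}$ composita, passing to direct limits for Mordell--Weil) is the standard shape of the Mazur--Rubin construction and is fine.
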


\subsection{Main results} We study statistical questions from a different perspective. Any elliptic curve $E_{/\Q}$ is isomorphic to a unique curve of the form \[E=E_{A,B}:y^2=x^3+Ax+B,\] where $(A,B)\in \Z^2$ such that for all primes $p$, either $p^4\nmid A$ or $p^6\nmid B$, cf. \cite[p. 814, l.-5]{duke1997elliptic}. Such a Weierstrass equation is minimal and the (naive) height of $E$ is defined as follows
\[H(E)=H(E_{A, B}):=\op{max}\{|A|^3, B^2\}.\] Here, we follow the convention in \emph{loc. cit.}. It is also customary to define the height to be $\op{max}\{4|A|^3, 27B^2\}$, however, this is only a matter of convention.
Let $\mathcal{C}$ be the set of all isomorphism classes of elliptic curves over $\Q$ and $\mathcal{C}(X)$ those with height $\leq X^6$. As is well known, \[\#\mathcal{C}(X)=C_1 X^5+O(X^3),\] where $C_1=\frac{4}{\zeta(10)}$, cf. \cite[Lemma 4.3]{brumer1992average}.
Let $\mathcal{S}$ be a subset of isomorphism classes of elliptic curves defined over $\Q$. Set $\mathcal{S}(X):=\mathcal{S}\cap \mathcal{C}(X)=\{E\in \mathcal{S}\mid H(E)\leq X^6\}$. The density of $\mathcal{S}$ is given by
\[\delta(\mathcal{S}):=\lim_{X\rightarrow \infty} \frac{\# \mathcal{S}(X)}{\# \mathcal{C}(X)},\] provided the limit exists.
\par In any case, we can define the upper and lower densities by \[\overline{\delta}(\mathcal{S}):=\limsup_{X\rightarrow \infty} \frac{\# \mathcal{S}(X)}{\# \mathcal{C}(X)}\] 
and \[\underline{\delta}(\mathcal{S}):=\liminf_{X\rightarrow \infty} \frac{\# \mathcal{S}(X)}{\# \mathcal{C}(X)}\] respectively. We prove the following result.
\begin{lthm}[Theorem \ref{our main result}]\label{main thm 1}
    Let $\ell\geq 5$ be a prime number and let $K$ be a number field. Then, the set of elliptic curves $E_{/\Q}$ that are $\ell$-diophantine stable over $K$ has density $1$.
\end{lthm}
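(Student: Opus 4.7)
The plan is to reduce Theorem~\ref{main thm 1} to the sufficient criterion for $\ell$-diophantine stability established by Mazur and Rubin in \cite{mazur2018diophantine}, and then to verify, by a height-ordered sieve, that this criterion is satisfied by a density-one family of elliptic curves $E/\Q$. Crucially, the Mazur--Rubin approach produces the required cyclic $\ell^n$-extensions by a twisting argument that controls $\chi$-isotypic parts of Selmer groups via Kolyvagin-system techniques, and so is insensitive to the Mordell--Weil rank of $E/\Q$; this is what makes a density-one (rather than positive-density) conclusion plausible without any Tate--Shafarevich hypothesis.

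The first step is to isolate explicit hypotheses on the triple $(E,\ell,K)$ which suffice for the Mazur--Rubin machinery to conclude $\ell$-diophantine stability of $E_{/K}$. These are, roughly: (i) the mod-$\ell$ Galois representation $\overline{\rho}_{E,\ell}\colon G_\Q\to\op{GL}_2(\F_\ell)$, restricted to $G_K$, has image containing $\op{SL}_2(\F_\ell)$; (ii) $E(K)[\ell]=0$ and the Tamagawa numbers $c_v(E/K)$ are coprime to $\ell$ at every prime $v$ of bad reduction of $E/K$; and (iii) appropriate local conditions at primes of $K$ above $\ell$, for instance good non-anomalous reduction. Given these, Mazur--Rubin construct the desired cyclic $\ell^n$-extensions $L/K$ with prescribed complete splitting at the finite set $\Sigma$ by choosing auxiliary Kolyvagin primes whose Frobenius class in $\mathrm{Gal}(K(E[\ell])/K)$ lies in a distinguished conjugacy class; these primes control the local conditions cutting out the $\chi$-twisted Selmer group and force $E(L)=E(K)$.

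The second step is to show that each of (i)--(iii) holds for density one of $E/\Q$ when the curves are ordered by $H(E)=\max\{|A|^3,B^2\}$. Surjectivity of $\overline{\rho}_{E,\ell}$ over $\Q$ is a density-one condition by a theorem of Duke, and restriction to the fixed finite extension $G_K$ preserves density one. The torsion condition $E(K)[\ell]=0$ reduces, via Merel's uniform boundedness theorem, to finitely many possible $\ell$-torsion subgroups over $K$, each cutting out a density-zero subfamily. The Tamagawa condition and the local conditions at primes of $K$ above $\ell$ and above the bad primes of $E$ are congruence conditions on the coefficients $(A,B)$, and a Brumer-style sieve (in the spirit of \cite{brumer1992average}) shows that the locus of $E$ violating any one of them has density zero.

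The principal technical obstacle is the uniformity in this sieve: one must control $\ell\mid c_v(E/K)$ simultaneously over all places $v$ of $K$ at which $E$ has bad reduction, where the set of such $v$ itself varies with $E$. This is handled by bounding, for each rational prime $p$, the contribution of $E/\Q$ for which some $v\mid p$ in $K$ has $\ell\mid c_v$, and showing that these contributions sum to zero against the asymptotic $\#\mathcal{C}(X)\sim C_1 X^5$. Once the sieve is executed and conditions (i)--(iii) are established on a density-one subfamily, the Mazur--Rubin criterion applies uniformly and yields the $\ell$-diophantine stability asserted in Theorem~\ref{main thm 1}.
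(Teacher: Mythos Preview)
Your overall strategy---reduce to the Mazur--Rubin criterion and show it holds for a density-one family---is the same as the paper's. But there are two genuine problems with the execution.

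\textbf{First, you have misstated the Mazur--Rubin criterion.} The hypotheses actually needed (Theorem~\ref{mazur rubin criterion}) are purely conditions on the Galois module $E[\ell]$ over $K$: irreducibility, vanishing of $H^1(K(E[\ell])/K,E[\ell])$, no abelian $\ell$-extension of $K(\mu_\ell)$ inside $K(E[\ell])$, and the existence of $\tau_0,\tau_1$ with prescribed coinvariants. There is \emph{no} Tamagawa-number hypothesis and \emph{no} local hypothesis at primes above $\ell$. Your conditions (ii) and (iii) appear to be imported from a different context (Selmer-growth or Iwasawa-theoretic criteria), and they are not just unnecessary---they are fatal to the density-one claim. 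For example, the set of $E/\Q$ with bad reduction at $\ell$ already has positive density (of order $1/\ell$), and the set of $E$ with some Tamagawa number $c_p$ divisible by $\ell$ likewise has positive density (e.g.\ split $I_\ell$-reduction at a fixed prime $p$ is a nontrivial congruence condition of positive density). So your sieve in the ``second step'' cannot yield density zero for the complement of (ii)--(iii), and the argument would stall at a positive-density conclusion at best.

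\textbf{Second, you have hand-waved the step that is actually the heart of the paper.} You write that ``restriction to the fixed finite extension $G_K$ preserves density one'' after invoking Duke's theorem. Duke gives surjectivity of $\bar\rho_{E,\ell}$ on $G_\Q$ for density one of $E$, but this does \emph{not} by itself force $\op{SL}_2(\F_\ell)\subset\bar\rho_{E,\ell}(G_K)$: one must rule out the possibility that $\Q(E[\ell])$ has large intersection with $\tilde K(\mu_\ell)$. Equivalently, one must show that for any fixed elliptic curve $A$, the set $\mathcal{T}_A$ of $E/\Q$ with $\bar\rho_E'\simeq\bar\rho_A'$ has density zero. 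This is not a consequence of Duke's sieve as stated; the paper proves it via a genuinely new bilinear large-sieve estimate (Proposition~\ref{main prop 3.1}), averaging over \emph{pairs} $(E_1,E_2)$ and exploiting that $E_1,E_2\in\mathcal{T}_A$ forces the Frobenius-trace coincidence $t_p(E_1)=\pm t_p(E_2)$ for all good $p$. Only after this does simplicity of $\op{PSL}_2(\F_\ell)$ and finiteness of subfields of $\tilde K(\mu_\ell)$ finish the job. Your proposal contains no analogue of this step, and without it the reduction to Duke is incomplete.
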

 We recall a result of Bhargava and Skinner, which will be shown to have consequences for Hilbert's Tenth Problem over number rings.

\begin{theorem}[Bhargava-Skinner \cite{BhargavaSkinner}]\label{BS thm}
    The lower density of elliptic curves $E_{/\Q}$ with rank $1$ is positive.
\end{theorem}

\begin{lthm}[Theorem \ref{main thm 2.6}]\label{main thm pos density}
    Let $\ell\geq 5$ be a prime number and let $K$ be a number field. Then, the set of elliptic curves $E_{/\Q}$ for which
    \begin{enumerate}
        \item $E_{/K}$ is $\ell$ diophantine stable, 
        \item $\op{rank}E(\Q)=1$
    \end{enumerate}
    has positive lower density.
\end{lthm}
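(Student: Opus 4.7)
The plan is to deduce Theorem \ref{main thm pos density} directly from Theorem \ref{main thm 1} together with Theorem \ref{BS thm} by an elementary density-intersection argument. Let $\mathcal{S}_1 \subseteq \mathcal{C}$ denote the set of isomorphism classes of elliptic curves $E_{/\Q}$ that are $\ell$-diophantine stable over $K$, and let $\mathcal{S}_2 \subseteq \mathcal{C}$ denote the set of isomorphism classes with $\op{rank} E(\Q) = 1$. Theorem \ref{main thm 1} yields $\delta(\mathcal{S}_1) = 1$, while Theorem \ref{BS thm} yields $\overline{\delta}(\mathcal{S}_2) > 0$ under the finiteness hypothesis on $\Sh(E/\Q)$. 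The goal is therefore to show $\overline{\delta}(\mathcal{S}_1 \cap \mathcal{S}_2) > 0$.

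To carry this out, I would record the tautological inclusion $\mathcal{S}_2 \subseteq (\mathcal{S}_1 \cap \mathcal{S}_2) \cup (\mathcal{C} \setminus \mathcal{S}_1)$. Intersecting with $\mathcal{C}(X)$ and dividing by $\#\mathcal{C}(X)$ gives
\[\frac{\#\mathcal{S}_2(X)}{\#\mathcal{C}(X)} \;\leq\; \frac{\#(\mathcal{S}_1 \cap \mathcal{S}_2)(X)}{\#\mathcal{C}(X)} + \frac{\#(\mathcal{C} \setminus \mathcal{S}_1)(X)}{\#\mathcal{C}(X)}.\]
Because $\delta(\mathcal{S}_1) = 1$ implies $\delta(\mathcal{C} \setminus \mathcal{S}_1) = 0$, the final term tends to $0$ as $X \to \infty$. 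Applying $\limsup$ to both sides, together with the standard identity $\limsup(a_X + c_X) = \limsup a_X$ when $c_X \to 0$, yields $\overline{\delta}(\mathcal{S}_2) \leq \overline{\delta}(\mathcal{S}_1 \cap \mathcal{S}_2)$, and the positive lower bound from Theorem \ref{BS thm} transfers to the intersection.

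The main obstacle is not in this final deduction but lies upstream: all the genuine arithmetic content — the Selmer- and Iwasawa-theoretic input needed to establish Theorem \ref{main thm 1}, together with the geometry-of-numbers input of Bhargava-Shankar — has already been absorbed into the hypotheses invoked above. Consequently I would present Theorem \ref{main thm pos density} simply as a short corollary-style paragraph immediately after its statement, containing essentially the three displayed lines of the preceding argument.
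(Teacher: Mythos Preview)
Your proposal is correct and matches the paper's approach exactly: the paper's proof of Theorem \ref{main thm 2.6} reads in full ``This is a direct consequence of Theorem \ref{our main result} and Theorem \ref{BS thm},'' and the proof of Theorem \ref{main thm pos density} is then ``The result is an immediate consequence of Theorem \ref{main thm 2.6}.'' Your explicit density-intersection inequality is a welcome elaboration of what the paper leaves implicit; one small inaccuracy in your commentary is that the upstream input to Theorem \ref{main thm 1} is a large-sieve refinement of Duke's theorem rather than Selmer- or Iwasawa-theoretic, but this does not affect the proof itself.
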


\begin{lthm}\label{main thm 2}
    Let $\ell\geq 5$ be a prime number, let $K$ be a number field, $n\in \Z_{\geq 1}$ and $\Sigma$ be a finite set of primes of $K$. Assume that $\Z$ is a diophantine subset of $\cO_K$, and consequently, Hilbert's Tenth Problem has a negative answer for $\cO_K$. Then, there are infinitely many degree $\ell^n$ cyclic extensions $L/K$ in which the primes of $\Sigma$ are completely split, such that $\Z$ is diophantine in $\cO_L$ and Hilbert's Tenth Problem has a negative answer for $\cO_L$.
\end{lthm}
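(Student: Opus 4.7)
The plan is to combine Theorem~\ref{main thm pos density} with the Poonen--Shlapentokh machinery that turns rank stability of an elliptic curve in an extension of number fields into a diophantine definition of the smaller ring of integers in the larger. Under the assumed finiteness of Tate--Shafarevich groups, Theorem~\ref{main thm pos density} produces at least one (in fact, a positive upper-density set of) elliptic curves $E_{/\Q}$ satisfying $\op{rank}E(\Q)=1$ and such that $E_{/K}$ is $\ell$-diophantine stable. Unwinding the definition of $\ell$-diophantine stability for this fixed $E$, the given $n\in \Z_{\geq 1}$, and the given finite set $\Sigma$ of primes of $K$, one obtains infinitely many cyclic degree-$\ell^n$ extensions $L/K$ in which every prime $v\in \Sigma$ is completely split and $E(L)=E(K)$.

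The next step is to upgrade the Mordell--Weil equality $E(L)=E(K)$ into the assertion that $\cO_K$ is a diophantine subset of $\cO_L$. Since $E(\Q)\subseteq E(K)\subseteq E(L)$, the rank of $E$ over $K$ is at least one, and the equality $E(L)=E(K)$ forces $\op{rank}E(L)=\op{rank}E(K)\geq 1$. At this point I invoke the Poonen--Shlapentokh construction in its standard form: whenever $L/K$ is a finite extension of number fields and $E$ is an elliptic curve over $K$ with $\op{rank}E(K)=\op{rank}E(L)\geq 1$, the ring $\cO_K$ is a diophantine subset of $\cO_L$. Composing this with the hypothesis that $\Z$ is diophantine in $\cO_K$ (and with the fact that diophantine relations are closed under composition) yields that $\Z$ is diophantine in $\cO_L$, whence by the classical Matiyasevich--Davis--Putnam--Robinson theorem Hilbert's tenth problem has a negative answer over $\cO_L$. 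Running this through the infinitely many $L$ produced above delivers the conclusion.

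The main obstacle, from my vantage point, is not the logical packaging but the correct citation and verification of the Poonen--Shlapentokh lemma in a form compatible with our setup: one must ensure that the version used applies to an elliptic curve defined over $K$ (rather than only over $\Q$) and to non-Galois or non-cyclic extensions if needed; base-changing $E_{/\Q}$ to $K$ and restricting to the cyclic $L/K$ handles this cleanly. All deeper analytic or arithmetic difficulties, namely controlling Selmer ranks in cyclic $\ell$-power extensions of $K$, have already been absorbed into Theorem~\ref{main thm pos density}. Thus the proof is essentially a bookkeeping step combining Theorem~\ref{main thm pos density} with a well-established diophantine-definition mechanism.
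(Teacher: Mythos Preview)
Your proposal is correct and follows essentially the same route as the paper: invoke Theorem~\ref{main thm pos density} (equivalently Theorem~\ref{main thm 2.6}) to obtain an elliptic curve $E_{/\Q}$ of positive rank that is $\ell$-diophantine stable over $K$, then apply Shlapentokh's theorem (Theorem~\ref{shlap 1}) to each of the resulting cyclic $\ell^n$-extensions $L/K$, and conclude by transitivity of integrally diophantine extensions. The paper's proof is a two-line sketch of exactly this argument; your version simply spells out the intermediate steps (unwinding the definition of diophantine stability, verifying $\op{rank}E(K)\geq 1$, and composing the diophantine definitions) more explicitly.
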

The key point is that the above result holds for all primes $\ell\geq 5$. It follows from results of Shlapentokh and Mazur-Rubin that the above assertion holds for a density $1$ set of primes $\ell$.

\subsection{Organization} Including the introduction, the article consists of four sections. In section \ref{s 2}, we recall a criterion of Mazur and Rubin for diophantine stability, cf. Theorem \ref{mazur rubin criterion}. For elliptic curves $E_{/\Q}$, this criterion can be verified if the field cut out by the residual representation satisfies some additional conditions, cf. Proposition \ref{T K l prop}. Density results are proven in section \ref{s 3}. It is shown that the conditions of Theorem \ref{mazur rubin criterion} are satisfied for a set of elliptic curves of density $1$. Theorems \ref{main thm 1} and \ref{main thm pos density} are proven at the end of this section. In section \ref{s 4}, we discuss applications to Hilbert's Tenth Problem and give the proof of Theorem \ref{main thm 2}, which is seen to follow from Theorem \ref{main thm pos density} and a well known result of Shlapentokh.

\subsection{Subsequent developments}

This paper was announced on the arXiv in April of 2023, and its ideas have since proven fruitful. In December 2024, Koymans and Pagano \cite{koymans2024hilbert} announced a proof of Hilbert's Tenth Problem for rings of integers of number fields, followed in January 2025 by a second proof, due to Alp\"oge, Bhargava, Ho, and Shnidman \cite{alpoge2025rank}. Both works rely crucially on recent additive combinatorics results of Kai \cite{kai2023linear}. The results in \cite{koymans2024hilbert} relies on a Markov model governing the dimensions of $2$-Selmer groups of quadratic twist families of elliptic curves over number fields. The strategy in \cite{alpoge2025rank} applies additive combinatorics results to a twist family of Jacobians of certain genus two hyperelliptic curves. The analytic tools and statistical viewpoint developed in this manuscript are quite different in nature. Our approach crucialy relies on a theorem of Duke \cite{duke1997elliptic}, which roughly states that almost all elliptic curves over $\Q$ (when ordered by height) have large Galois images associated to them. Given the strategies pursued in both works, it is reasonable to suggest that the perspective introduced here (studying rank stability through a statistical approach) has played a significant yet indirect role in shaping the broader context of these subsequent developments. In addition, our results offer a complementary direction for understanding Diophantine stability phenomena from a statistical perspective.
\subsection*{Acknowledgments} We would like to thank the referees for their careful review of the manuscript and for their valuable comments and corrections. The project was completed while the first-named author was a CRM–Simons postdoctoral fellow, and he gratefully acknowledges support from Simons. We are also indebted to Lea Beneish, Barry Mazur, and Ravi Ramakrishna for their helpful suggestions.

\subsection*{Data Availability} There is no data associated to the results of this manuscript.

\section{Diophantine stability}\label{s 2}
\subsection{Diophantine stability for elliptic curves}
\par We review the notion of diophantine stability for an elliptic curve. Throughout, we shall fix a number field $K$ and a prime number $\ell\geq 5$. For $n\in \Z_{\geq 1}$, we say that an extension $L$ of $K$ is a 
$\Z/\ell^n\Z$-extension if it is a Galois extension of $K$ such that $\op{Gal}(L/K)$ is isomorphic to $\Z/\ell^n \Z$. We recall the notion of diophantine stability, due to Mazur and Rubin \cite{mazur2018diophantine}.

\begin{definition}Let $L/K$ be a field extension and let $E_{/K}$ be an elliptic curve. Then, $E_{/K}$ is said to be diophantine stable in $L$ if $E(L)=E(K)$. It is said that $E_{/K}$ is diophantine stable at $\ell$ if for all $n\in \Z_{\geq 1}$ and every finite set of primes $\Sigma$ of $K$, there are infinitely many $\Z/\ell^n \Z$-extensions $L/K$ such that 
\begin{enumerate}
    \item $E(L)=E(K)$,
    \item all primes in $\Sigma$ are completely split in $L$.
\end{enumerate} 
Given an elliptic curve $E_{/\Q}$, we say that $(E,K, \ell)$ satisfies (DS) if $E_{/K}$ is diophantine stable at $\ell$.
\end{definition}
Given a number field $K$, set $\op{G}_K$ to denote the absolute Galois group $\op{Gal}(\bar{K}/K)$. Let $E_{/\Q}$ be an elliptic curve and denote by $E[\ell]$, the $\ell$-torsion group 
\[E[\ell]:=\op{ker}\left(E(\bar{\Q})\xrightarrow{\times \ell} E(\bar{\Q})\right).\]
Set $\bar{\rho}_E=\bar{\rho}_{E, \ell}:\op{G}_{\Q}\rightarrow \op{GL}_2(\F_\ell)$ to denote the Galois representation on $E[\ell]$. Set $\Q(E[\ell])$ to be the field cut out by $E[\ell]$, i.e., $\Q(E[\ell]):=\bar{\Q}^{\op{ker}\bar{\rho}_{E, \ell}}$. 

\par We recall a criterion for diophantine stability, specialized to rational elliptic curves without complex multiplication.

\begin{theorem}[Mazur-Rubin]\label{mazur rubin criterion}
Let $K$ be a number field, $E_{/\Q}$ an elliptic curve without complex multiplication, and $\ell\geq 3$ a prime number.  Assume that the following conditions hold
\begin{enumerate}
    \item $E[\ell]$ is an irreducible $\op{G}_K$-module, 
    \item $H^1(K(E[\ell])/K, E[\ell])=0$,
    \item there is no abelian extension of degree $\ell$ of $K(\mu_\ell)$ contained in $K(E[\ell])$,
    \item there is $\tau_0\in \op{G}_{K(\mu_\ell)}$ such that $E[\ell]/(\tau_0-1)E[\ell]=0$,
    \item there is $\tau_1\in \op{G}_{K(\mu_\ell)}$ such that $\op{dim}_{\F_\ell}\left(E[\ell]/(\tau_1-1)E[\ell]\right)=1$.
\end{enumerate}
Then, $E_{/K}$ is diophantine stable at $\ell$.
\end{theorem}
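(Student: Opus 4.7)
The plan is to construct, for each $n\geq 1$ and each finite set $\Sigma$ of primes of $K$, infinitely many cyclic degree-$\ell^n$ extensions $L/K$ in which the primes of $\Sigma$ split completely and $E(L)=E(K)$. I would first reduce this to a statement about twisted Selmer groups: decomposing $E(L)\otimes\Z_\ell$ under the action of $\op{Gal}(L/K)$, and observing that condition (1) forces $E[\ell]^{\op{G}_K}=0$ and, after arranging $L\cap K(E[\ell])=K$, also $E[\ell]^{\op{G}_L}=0$, the identity $E(L)=E(K)$ becomes equivalent to the vanishing of $\Sel_{\ell^\infty}(E^\chi/K)$ for every nontrivial character $\chi$ of $\op{Gal}(L/K)$, where $E^\chi$ denotes the $\chi$-twist.

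The second step sets up the cohomological framework and produces the extensions. By (1), inflation--restriction along $K(E[\ell])/K$ is injective on $H^1(-,E[\ell])$, and (2) eliminates the kernel entirely, yielding an isomorphism
\[
H^1(K,E[\ell]) \xrightarrow{\sim} \op{Hom}_{\op{G}_K}\bigl(\op{G}_{K(E[\ell])},E[\ell]\bigr).
\]
Condition (3) then ensures that any cyclic degree-$\ell$ extension of $K(\mu_\ell)$ produced by Kummer theory can be chosen linearly disjoint from $K(E[\ell])$. Using Chebotarev density, I would produce infinitely many primes $q$ of $K$, coprime to $\Sigma$, to the conductor of $E$, and to $\ell$, whose Frobenius in $\op{Gal}(K(E[\ell^n],\mu_{\ell^n})/K)$ acts as $\tau_0$ on $E[\ell^n]$ and trivially on $\mu_{\ell^n}$. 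By (4), this forces $Nq\equiv 1\pmod{\ell^n}$ with $(\op{Frob}_q-1)$ invertible on $E[\ell^n]$, so the ray class group of $K$ modulo $q$ admits a cyclic quotient of order $\ell^n$ cutting out a cyclic degree-$\ell^n$ extension $L_q/K$ ramified only at $q$; complete splitting of each $v\in\Sigma$ in $L_q$ is enforced as an additional Chebotarev condition.

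The hard part is proving $\Sel_{\ell^n}(E^\chi/K)=0$ for the nontrivial characters $\chi$ of $\op{Gal}(L_q/K)$, and this is where condition (5) plays a Kolyvagin-style role. Given a putative nonzero class $c$ in the dual Selmer group, I would use Chebotarev to find an auxiliary prime $q'$ whose Frobenius acts as $\tau_1$ on $E[\ell^n]$; the one-dimensional cokernel of $\tau_1-1$ supplied by (5) furnishes a cohomology class whose singular localization at $q'$ is nonzero and pairs nontrivially, under the local Tate pairing, with the localization of $c$ at $q'$. Poitou--Tate global duality then forces $c=0$, so the dual Selmer group vanishes; hence $\Sel_{\ell^n}(E^\chi/K)=0$ and $E(L_q)=E(K)$. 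Varying $q$ over this infinite family of Chebotarev primes yields the required extensions, each ramified at a distinct prime.
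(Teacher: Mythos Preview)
The paper does not prove this statement at all: its ``proof'' is the single line that the result follows from \cite[Theorem 9.21]{mazur2018diophantine}. So there is nothing to compare your argument to in the paper itself; you are attempting to reconstruct the Mazur--Rubin machinery, which the authors deliberately treat as a black box.

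As a sketch of that machinery your outline has the right ingredients (twisted Selmer groups, Chebotarev, the roles of $\tau_0$ and $\tau_1$), but it contains a genuine structural gap in the final step. You first fix the extension $L_q/K$, ramified at a \emph{single} prime $q$ chosen only so that $\op{Frob}_q$ acts like $\tau_0$, and \emph{afterwards} try to show that $\Sel_{\ell^n}(E^\chi/K)=0$ for the nontrivial characters $\chi$ of $\op{Gal}(L_q/K)$ by producing an auxiliary prime $q'$ and invoking Poitou--Tate. But the auxiliary class you build at $q'$ has no controlled local behaviour away from $q'$, so global duality does not give the contradiction you claim; and more fundamentally there is no reason the $\chi$-twisted Selmer group should vanish for a cyclic extension ramified at a single, essentially random, prime. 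In Mazur--Rubin the logic runs the other way: the set of ramification primes of $L$ is built up inductively, each new prime chosen (via $\tau_1$-type Chebotarev conditions) precisely to kill a remaining Selmer class, and only after this process terminates does one obtain an $L$ with the desired vanishing. Your argument would need to be restructured so that the construction of $L$ and the annihilation of the Selmer group happen simultaneously.

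There are also smaller imprecisions worth flagging: the vanishing of the twisted Selmer groups is \emph{sufficient} for $E(L)=E(K)$, not equivalent to it; and conditions (4)--(5) concern $E[\ell]$ and $\op{G}_{K(\mu_\ell)}$, so passing to elements acting suitably on $E[\ell^n]$ while lying in $\op{G}_{K(\mu_{\ell^n})}$ requires a lifting argument you have not supplied.
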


\begin{proof}
    The result follows from \cite[Theorem 9.21]{mazur2018diophantine}.
\end{proof}

 In the next section, we prove the following result.

\begin{theorem}\label{our main result}
    Let $\ell\geq 5$ be a prime number and let $K$ be a number field. Then, the set of elliptic curves $E_{/\Q}$ for which the conditions of Theorem \ref{mazur rubin criterion} are satisfied has density $1$. 
\end{theorem}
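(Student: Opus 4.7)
The strategy is to invoke Proposition \ref{T K l prop} (stated earlier in the section), which reduces the five cohomological and group-theoretic hypotheses of Theorem \ref{mazur rubin criterion} to a single condition on the residual image: that $\bar{\rho}_{E,\ell}(\op{G}_K)$ contains $\op{SL}_2(\F_\ell)$. For $\ell \geq 5$ the group $\op{SL}_2(\F_\ell)$ is perfect and acts irreducibly on $\F_\ell^{\oplus 2}$; it contains both transvections (accounting for condition (5)) and regular semisimple elements with no eigenvalue equal to $1$ (accounting for condition (4)); and the cohomology group $H^1(\op{GL}_2(\F_\ell), \F_\ell^{\oplus 2})$ vanishes. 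So it suffices to prove that the set of elliptic curves $E/\Q$ with $\bar{\rho}_{E,\ell}(\op{G}_K) \supseteq \op{SL}_2(\F_\ell)$ has density one.

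The first step is to recall the classical theorem of Duke (with refinements by Cojocaru--Hall and Jones): for any fixed prime $\ell$, the number of elliptic curves of naive height at most $X^6$ whose mod-$\ell$ representation $\bar{\rho}_{E,\ell}: \op{G}_\Q \to \op{GL}_2(\F_\ell)$ fails to be surjective is $O(X^{5-\delta})$ for some $\delta > 0$. Restricting to the resulting density-one set of curves with $\bar{\rho}_{E,\ell}(\op{G}_\Q) = \op{GL}_2(\F_\ell)$, the condition $\bar{\rho}_{E,\ell}(\op{G}_K) \supseteq \op{SL}_2(\F_\ell)$ becomes, via the Galois correspondence, the assertion that $K \cap \Q(E[\ell]) \subseteq \Q(\mu_\ell)$, since $\Q(\mu_\ell) = \Q(E[\ell])^{\op{SL}_2(\F_\ell)}$. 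Writing $\widetilde{K}$ for the Galois closure of $K$ over $\Q$, and noting that $K \cap \Q(E[\ell]) \subseteq \widetilde{K} \cap \Q(E[\ell])$, it is enough to show that $\widetilde{K} \cap \Q(E[\ell]) \subseteq \Q(\mu_\ell)$.

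The intersection $\widetilde{K} \cap \Q(E[\ell])$ is one of the finitely many Galois subfields of $\widetilde{K}$ over $\Q$. For each such subfield $F$ with $F \not\subseteq \Q(\mu_\ell)$, I would show that $\{E : F \subseteq \Q(E[\ell])\}$ has density zero. If $\op{Gal}(F/\Q)$ is abelian, this is automatic: any abelian Galois subfield of $\Q(E[\ell])$ is fixed by the commutator subgroup $[\op{GL}_2(\F_\ell), \op{GL}_2(\F_\ell)] = \op{SL}_2(\F_\ell)$ (valid for $\ell \geq 5$), hence lies in $\Q(\mu_\ell)$, contradicting the hypothesis on $F$. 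If $\op{Gal}(F/\Q)$ is non-abelian, then it has $\op{PSL}_2(\F_\ell)$ as a composition factor, and the condition $F \subseteq \Q(E[\ell])$ forces $\bar{\rho}_{E,\ell}$ to induce a prescribed surjection $\op{G}_\Q \twoheadrightarrow \op{Gal}(F/\Q)$; this identifies the admissible $E$ with $\Q$-points on a proper closed subvariety of the level-$\ell$ moduli space, and a density-zero bound follows from Duke-type sieve estimates (or, alternatively, from a large-sieve argument constraining Frobenius traces modulo $\ell$). The quantitative density-zero estimate in this non-abelian case is the principal obstacle; once it is established, summing the negligible contributions over the finitely many bad $F$ completes the proof.
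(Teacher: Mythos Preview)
Your proposal is essentially correct and follows the same architecture as the paper: reduce via Proposition~\ref{T K l prop} and Duke's theorem to showing that, among curves with surjective $\bar{\rho}_E$, those whose $\ell$-division field interacts nontrivially with the fixed field $\tilde K(\mu_\ell)$ form a density-zero set; then partition this bad set according to the finitely many relevant Galois subfields $F$. A small discrepancy: Proposition~\ref{T K l prop} as stated in the paper is phrased in terms of membership in $\mathcal{T}_{K,\ell}$ (surjectivity of $\bar{\rho}_E'$ together with $\Q(\bar{\rho}_E')\not\subseteq\tilde K(\mu_\ell)$), not literally as ``$\bar{\rho}_{E,\ell}(\op{G}_K)\supseteq\op{SL}_2(\F_\ell)$''; your reformulation is equivalent in effect but you should cite the proposition as written.

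The one point that needs correction is your treatment of the non-abelian case. The sentence ``this identifies the admissible $E$ with $\Q$-points on a proper closed subvariety of the level-$\ell$ moduli space'' is not accurate: fixing the Galois-theoretic surjection $\op{G}_\Q\twoheadrightarrow\op{Gal}(F/\Q)$ is not a closed condition on $Y(1)$ or $Y(\ell)$ in any useful sense, and no density bound follows from that description alone. What actually happens (and what the paper does) is your second alternative. The paper observes that for such $F$ the projective representation $\bar{\rho}_E'$ is pinned down up to a finite ambiguity coming from $\op{Aut}(\op{GL}_2'(\F_\ell))$, so the bad set is a finite union of sets $\mathcal{T}_A=\{E:\bar{\rho}_E'\simeq\bar{\rho}_A'\}$ for fixed curves $A$. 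It then proves $\#\mathcal{T}_A(X)/\#\mathcal{C}(X)=O(\log X/\sqrt{X})$ via a four-variable large sieve over \emph{pairs} $(E_1,E_2)$, exploiting that $E_1,E_2\in\mathcal{T}_A$ forces $t_p(E_1)=\pm t_p(E_2)$ for all good $p$ and hence $\pi_{E_1,E_2}(X,a,b,d,\ell)=0$ whenever $a\neq\pm b$. This two-curve sieve (Propositions~\ref{main prop 3.1} and~\ref{T_A density 0}) is the paper's main technical input and is exactly the ``large-sieve argument constraining Frobenius traces modulo $\ell$'' you allude to; you should replace the subvariety remark with this argument.
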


\begin{definition}\label{s k l def}
    Let $\mathcal{S}_{K, \ell}$ be the set of elliptic curves $E_{/\Q}$ such that 
\begin{enumerate}
    \item $\op{rank}E(\Q)=1$,
    \item $(E,K,\ell)$ satisfies (DS).
\end{enumerate}
\end{definition}

\begin{theorem}\label{main thm 2.6}
    Let $\ell\geq 5$ be a prime number and $K$ be any number field. Then, $\mathcal{S}_{K, \ell}$ has positive lower density.
\end{theorem}
\begin{proof}
    This is a direct consequence of Theorem \ref{our main result} and Theorem \ref{BS thm}.
\end{proof}

\subsection{A criterion for diophantine stability in terms of the residual image} In the remainder of this section, we shall introduce some further notation and establish a criterion for the conditions of Theorem \ref{mazur rubin criterion} to be satisfied. We fix $(K, \ell)$ where $K$ is a number field and $\ell\geq 5$ is a prime number. We let $E_{/\Q}$ be an elliptic curve and recall that \[\bar{\rho}_E:\op{G}_{\Q}\rightarrow \op{GL}_2(\F_\ell)\] denotes the representation on $E[\ell]$. Set $\mathbf{1}$ to denote the identity element of $\op{GL}_2(\F_\ell)$ and $\langle -\mathbf{1}\rangle$ the subgroup of order $2$ generated by $-\mathbf{1}$. We set $\op{GL}_2'(\F_\ell):=\frac{\op{GL}_2(\F_\ell)}{\langle -\mathbf{1}\rangle}$ and denote by \[\bar{\rho}_E':\op{G}_{\Q}\rightarrow \op{GL}_2'(\F_\ell)\] the homomorphism which is the composite of $\bar{\rho}_E$ with the natural quotient map. The determinant character $\op{det}\bar{\rho}_E$ is the mod-$\ell$ cyclotomic character $\chi_\ell$, and thus the kernel of $\op{det}\bar{\rho}_E$ is $\op{G}_{\Q(\mu_\ell)}$. Note that $\langle -\mathbf{1}\rangle$ lies in the kernel of the determinant map, and hence, $\op{det}:\op{GL}_2(\F_\ell)\rightarrow \F_\ell^\times$ factors through $\op{GL}_2'(\F_\ell)$. Note that the restriction of $\bar{\rho}_E$ and $\bar{\rho}_E'$ to $\op{G}_{\Q(\mu_\ell)}$ gives rise to homomorphisms 
$\bar{\rho}_{E}:\op{G}_{\Q(\mu_\ell)}\rightarrow \op{SL}_2(\F_\ell)$ and $\bar{\rho}_{E}':\op{G}_{\Q(\mu_\ell)}\rightarrow \op{PSL}_2(\F_\ell)$ respectively. Galois \cite[p.\ 412]{galois1846works}, in a letter to Chevalier showed that when $\ell\geq 5$, the group $\op{PSL}_2(\F_\ell)$ is simple. This property shall prove to be considerably useful in establishing our results.

\par Given a number field $F$, a finite group $\mathcal{G}$, and a homomorphism $\varrho: \op{G}_F\rightarrow \mathcal{G}$, we let $F(\varrho)$ denote the field $\bar{F}^{\op{ker}\varrho}$. We identify the Galois group $\op{Gal}(F(\varrho)/F)$ with the image of $\varrho$. We refer to $F(\varrho)$ as the extension of $F$ cut out by $\varrho$. In particular, when $\varrho$ is surjective, $F(\varrho)$ is a $\mathcal{G}$-extension of $F$. With respect to the above notation, we set $F(\bar{\rho}_E)$ (resp. $F(\bar{\rho}_E')$) to be the extension of $F$ cut out by $\bar{\rho}_E$ (resp. $\bar{\rho}_E'$). We shall also write $F(E[\ell])$ to denote the field $F(\bar{\rho}_E)$. Let $G_E$ (resp. $G_E'$) denote the Galois group $\op{Gal}(\Q(\bar{\rho}_E)/\Q)$ (resp. $\op{Gal}(\Q(\bar{\rho}_E')/\Q)$). Set $D:=\op{Gal}(\Q(\bar{\rho}_E)/\Q(\bar{\rho}_E'))$ and identify $G_E'$ with $G_E/D$. The residual representation $\bar{\rho}_E$ induces an injection \[\bar{\rho}_E:G_E\hookrightarrow \op{GL}_2(\F_\ell),\] and $D$ lies in the kernel of the determinant character
\begin{equation}\label{det character}\op{det}\bar{\rho}_E: G_E\hookrightarrow \op{GL}_2(\F_\ell)\rightarrow \F_\ell^\times.\end{equation} The character $G_E\xrightarrow{\op{det}\bar{\rho}_E} \F_\ell^\times$ \eqref{det character} thus factors through the quotient $G_E'$. Let $H_E$ (resp. $\bar{H}_E$) denote the kernel of $G_E\xrightarrow{\op{det}\bar{\rho}_E}\F_\ell^\times$ (resp. $G_E'\xrightarrow{\op{det}\bar{\rho}_E}\F_\ell^\times$). Note that $D$ is contained in $H_E$ and $\bar{H}_E=H_E/D$. We find that $H_E=\op{Gal}(\Q(\bar{\rho}_E)/\Q(\mu_\ell))$, while $\Q(\mu_\ell)$ is contained $\Q(\bar{\rho}_E')$ and $\bar{H}_E=\op{Gal}(\Q(\bar{\rho}_E')/\Q(\mu_\ell))$. Set $\tilde{K}$ to be the Galois closure of $K$ over $\Q$.

\begin{definition}\label{def T K L}
    Let $\mathcal{T}_{K, \ell}$ be the set of elliptic curves $E_{/\Q}$ such that the following conditions are satisfied
\begin{enumerate}
    \item $\bar{\rho}_E'$ is surjective,
    \item $\tilde{K}(\mu_\ell)$ does not contain $\Q(\bar{\rho}_E')$. 
\end{enumerate}
\end{definition}
Note that if $\bar{\rho}'_E$ is surjective, then the image of $\bar{\rho}_E$ is not solvable. In particular, $E$ does not have complex multiplication.

\begin{lemma}\label{lemma 3.6}
For $E\in \mathcal{T}_{K, \ell}$, \[\op{Gal}(K(\bar{\rho}_E')/K(\mu_\ell))\simeq \op{PSL}_2(\F_\ell).\]
\end{lemma}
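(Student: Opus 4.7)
\medskip

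\noindent\textbf{Proof plan for Lemma \ref{lemma 3.6}.}
The strategy is to identify $\operatorname{Gal}(K(\bar{\rho}_E')/K(\mu_\ell))$ with a subgroup of $\operatorname{PSL}_2(\F_\ell)$ that can be proved to be normal, and then to invoke the simplicity of $\operatorname{PSL}_2(\F_\ell)$ for $\ell\geq 5$ together with hypothesis (2) defining $\mathcal{T}_{K,\ell}$ to rule out the trivial possibility.

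First I would record the base case over $\Q$. Since $\bar{\rho}_E'$ is surjective and $\det\circ\bar{\rho}_E'=\chi_\ell$, the kernel of $\chi_\ell$ on $\op{G}_\Q$, which is $\op{G}_{\Q(\mu_\ell)}$, maps surjectively onto $\ker(\det:\op{GL}_2'(\F_\ell)\to\F_\ell^\times)=\op{SL}_2(\F_\ell)/\langle-\mathbf{1}\rangle=\op{PSL}_2(\F_\ell)$. Thus, writing $L:=\Q(\bar{\rho}_E')$ and $F:=\Q(\mu_\ell)$, we have $\bar H_E=\op{Gal}(L/F)\simeq \op{PSL}_2(\F_\ell)$. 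It now suffices to show that the natural restriction map identifies $\op{Gal}(KL/KF)$ with all of $\op{Gal}(L/F)$.

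Next, by standard Galois theory, since $L/\Q$ is Galois,
\[
\op{Gal}(KL/KF)\;\xrightarrow{\ \sim\ }\;\op{Gal}(L/L\cap KF)
\]
via restriction, and this is a subgroup of $\op{Gal}(L/F)=\op{PSL}_2(\F_\ell)$. The key point is to enlarge $K$ to its Galois closure $\tilde K$: both $\tilde K F/\Q$ and $L/\Q$ are Galois, so their intersection $L\cap \tilde K F$ is Galois over $\Q$, and hence over $F$. Consequently $\op{Gal}(L/L\cap \tilde K F)$ is a normal subgroup of $\op{Gal}(L/F)\simeq \op{PSL}_2(\F_\ell)$. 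By the theorem of Galois cited above, for $\ell\geq 5$ this group is simple, so the subgroup is either trivial or all of $\op{PSL}_2(\F_\ell)$.

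The former possibility would give $L\subseteq \tilde K F=\tilde K(\mu_\ell)$, contradicting condition (2) in Definition \ref{def T K L}. Hence $L\cap \tilde K F=F$, and a fortiori $L\cap KF=F$, from which $\op{Gal}(KL/KF)=\op{Gal}(L/F)\simeq \op{PSL}_2(\F_\ell)$, as required. The only point that takes a moment of thought is the passage to $\tilde K$: without it the field $KF$ need not be Galois over $\Q$, so the intersection $L\cap KF$ need not cut out a normal subgroup of $\op{Gal}(L/F)$ and the simplicity argument cannot be applied directly; once $\tilde K$ is introduced, the rest is formal.
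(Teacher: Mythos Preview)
Your proposal is correct and follows essentially the same approach as the paper: establish $\op{Gal}(\Q(\bar{\rho}_E')/\Q(\mu_\ell))\simeq\op{PSL}_2(\F_\ell)$ from surjectivity of $\bar{\rho}_E'$, then use simplicity of $\op{PSL}_2(\F_\ell)$ together with condition~(2) to force $\tilde K(\mu_\ell)\cap\Q(\bar{\rho}_E')=\Q(\mu_\ell)$, and conclude via the restriction isomorphism. Your write-up is in fact more explicit than the paper's about why one passes to the Galois closure $\tilde K$ (to guarantee that the intersection cuts out a \emph{normal} subgroup so that simplicity applies), a point the paper leaves implicit.
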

\begin{proof}
    Note that since $\bar{\rho}_E'$ is surjective, we find that $\op{Gal}(\Q(\bar{\rho}_E')/\Q(\mu_\ell))\simeq \op{PSL}_2(\F_\ell)$. On the other hand, since $\op{PSL}_2(\F_\ell)$ is simple, and $\tilde{K}(\mu_\ell)$ does not contain $\Q(\bar{\rho}_E')$, it follows that $\tilde{K}(\mu_\ell)\cap \Q(\bar{\rho}_E')=\Q(\mu_\ell)$. This implies that $K(\mu_\ell)\cap \Q(\bar{\rho}_E')=\Q(\mu_\ell)$ and therefore we conclude that 
    \[\op{Gal}(K(\bar{\rho}_E')/K(\mu_\ell))\simeq \op{Gal}(\Q(\bar{\rho}_E')/\Q(\mu_\ell))\simeq \op{PSL}_2(\F_\ell).\]
\end{proof}

\begin{proposition}\label{T K l prop}
For every elliptic curve $E\in \mathcal{T}_{K,\ell}$, $(E,K, \ell)$ satisfies (DS).
\end{proposition}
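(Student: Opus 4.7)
The plan is to verify conditions (1)--(5) of Theorem \ref{mazur rubin criterion} for every $E \in \mathcal{T}_{K,\ell}$. The decisive first step will be to upgrade Lemma \ref{lemma 3.6} from $\op{PSL}_2$ to $\op{SL}_2$, namely to prove
\[
\op{Gal}(K(E[\ell])/K(\mu_\ell)) = \op{SL}_2(\F_\ell).
\]
Since $\det \bar{\rho}_E = \chi_\ell$, the image of $\op{G}_{K(\mu_\ell)}$ under $\bar{\rho}_E$ sits inside $\op{SL}_2(\F_\ell)$, and Lemma \ref{lemma 3.6} says that after quotienting by $\langle -\mathbf{1}\rangle$ it fills up $\op{PSL}_2(\F_\ell)$. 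Any subgroup $H \leq \op{SL}_2(\F_\ell)$ surjecting onto $\op{PSL}_2(\F_\ell)$ either contains $-\mathbf{1}$, in which case $H = \op{SL}_2(\F_\ell)$, or avoids $-\mathbf{1}$ and therefore has index $2$ in $\op{SL}_2(\F_\ell)$; but for $\ell \geq 5$ the group $\op{SL}_2(\F_\ell)$ is perfect, so it has no subgroup of index $2$ and the first alternative must hold.

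Once this is known, conditions (1), (3), (4), (5) are immediate. For (1), the standard two-dimensional representation of $\op{SL}_2(\F_\ell)$ is irreducible (any stable line would place the group inside a Borel subgroup), so $E[\ell]$ is already irreducible as a $\op{G}_{K(\mu_\ell)}$-module and \emph{a fortiori} as a $\op{G}_K$-module. For (3), a cyclic degree-$\ell$ extension of $K(\mu_\ell)$ inside $K(E[\ell])$ would correspond to a normal subgroup of $\op{SL}_2(\F_\ell)$ with quotient $\Z/\ell\Z$, which is again ruled out by perfectness. For (4), I would choose $\tau_0 \in \op{G}_{K(\mu_\ell)}$ with $\bar{\rho}_E(\tau_0) = -\mathbf{1}$ (which is possible since $-\mathbf{1}$ lies in the image $\op{SL}_2(\F_\ell)$); then $\bar{\rho}_E(\tau_0) - \mathbf{1} = -2\,\mathbf{1}$ is invertible for $\ell$ odd, so $E[\ell]/(\tau_0-1)E[\ell]=0$. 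For (5), I would pick $\tau_1$ mapping to a non-trivial unipotent element, e.g.\ $\mtx{1}{1}{0}{1}$; then $\bar{\rho}_E(\tau_1) - \mathbf{1}$ has rank one and its cokernel is one-dimensional.

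The only cohomological point, condition (2), I expect to handle via Sah's lemma. Let $G := \op{Gal}(K(E[\ell])/K)$; by the first paragraph $G$ contains $-\mathbf{1}$, and because $-\mathbf{1}$ is central in $\op{GL}_2(\F_\ell)$ it is central in $G$. It acts on $E[\ell]$ as multiplication by $-1$, so multiplication by $1-(-\mathbf{1}) = 2\,\mathbf{1}$ is invertible on $E[\ell]$; Sah's lemma then forces $H^i(G, E[\ell]) = 0$ for every $i \geq 1$, yielding (2). I expect the main obstacle to be the first step, namely the passage from the projective image in Lemma \ref{lemma 3.6} to the full linear image --- once that identification is in hand, the remaining verifications reduce to well-known structural facts about $\op{SL}_2(\F_\ell)$. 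The hypothesis $\ell \geq 5$ enters essentially twice: in the perfectness of $\op{SL}_2(\F_\ell)$ (needed for the first step and for (3)) and in the invertibility of $2$ in $\F_\ell$ (needed for (2) and (4)).
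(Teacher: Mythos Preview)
Your proposal is correct and follows essentially the same strategy as the paper: verify the five hypotheses of Theorem~\ref{mazur rubin criterion} using the large image over $K(\mu_\ell)$ guaranteed by Lemma~\ref{lemma 3.6}. The organization differs slightly. You lift Lemma~\ref{lemma 3.6} from $\op{PSL}_2$ to $\op{SL}_2$ at the outset via perfectness of $\op{SL}_2(\F_\ell)$ for $\ell\geq 5$, whereas the paper obtains $\op{SL}_2(\F_\ell)\subseteq\bar{\rho}_E(\op{G}_K)$ inside the verification of condition~(2) by invoking Sutherland's subgroup classification; your argument is more self-contained here. For (2) you use Sah's lemma with the central element $-\mathbf{1}$, while the paper runs inflation--restriction along $\langle -\mathbf{1}\rangle$; these are the same idea. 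For (4) you take $\tau_0$ with image $-\mathbf{1}$, whereas the paper uses a diagonal element $\mtx{a}{}{}{a^{-1}}$ with $a\neq\pm1$; both choices work. Conditions (1), (3), (5) are handled identically in spirit.
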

\begin{proof}
    It suffices to show that the five conditions of Theorem \ref{mazur rubin criterion} are satisfied. Note that Lemma \ref{lemma 3.6} asserts that $\op{Gal}(K(\bar{\rho}_E')/K(\mu_\ell))\simeq \op{PSL}_2(\F_\ell)$.
\begin{enumerate}
    \item Since $\op{Gal}(K(\bar{\rho}_E')/K(\mu_\ell))\simeq \op{PSL}_2(\F_\ell)$, $E[\ell]$ is irreducible as a $\op{G}_K$-module. 
    \item This part follows from \cite{lawson2017vanishing} or \cite[Lemma 2.2]{prasad2021relating}. We summarize the argument here. Let $G\subseteq \op{GL}_2(\F_\ell)$ denote the image of $\bar{\rho}_{E|\op{G}_K}$. Note that since $\op{Gal}(K(\bar{\rho}_E')/K(\mu_\ell))\simeq \op{PSL}_2(\F_\ell)$, $G$ contains an element of order $\ell$. Clearly, $G$ is not contained in a Borel subgroup. It follows from \cite[Proposition 3.1]{sutherland2016computing} that $G$ contains $\op{SL}_2(\F_\ell)$. In particular, $G$ contains the negative identity $-\mathbf{1}$. Identify $G$ with $\op{Gal}(K(E[\ell])/K)$ and let $\Delta$ be the subgroup of $G$ generated by $-\mathbf{1}$. Inflation-restriction yields an exact sequence
    \[0\rightarrow H^1(G/\Delta, E[\ell]^\Delta)\rightarrow H^1(G, E[\ell])\rightarrow H^1(\Delta, E[\ell]).\] Since $E[\ell]^\Delta=0$, it follows that $H^1(G/\Delta, E[\ell]^\Delta)=0$. Since $D$ has order $2$, its order is coprime to that of $E[\ell]$ and we find that $H^1(\Delta, E[\ell])=0$. Therefore, we conclude from the above exact sequence that $H^1(G, E[\ell])=0$.
    \item Since $\op{PSL}_2(\F_\ell)$ is simple, $\op{Gal}(K(\bar{\rho}_E')/K(\mu_\ell))\simeq \op{PSL}_2(\F_\ell)$ and the degree $[K(E[\ell]):K(\bar{\rho}_E')]$ is prime to $\ell$, it follows that there is no abelian extension of degree $\ell$ of $K(\mu_\ell)$ contained in $K(E[\ell])$.
    \item Since $\bar{\rho}_E'(\op{G}_{K(\mu_\ell)})$ is isomorphic to $\op{PSL}_2(\F_\ell)$, it is clear that there is a diagonal element $\mtx{a}{}{}{a^{-1}}\in \bar{\rho}_E(\op{G}_{K(\mu_\ell)})$, such that $a\neq \pm 1$. Let $\tau_0\in \op{G}_{K(\mu_\ell)}$ be chosen so that $\bar{\rho}(\tau_0)=\mtx{a}{}{}{a^{-1}}$. Then, it follows that $E[\ell]/(\tau_0-1)E[\ell]=0$. 
    \item It is easy to see that $\bar{\rho}_E(\op{G}_{K(\mu_\ell)})$ contains a unipotent element $\mtx{1}{b}{}{1}$, with $b\neq 0$. Let $\tau_1\in \op{G}_{K(\mu_\ell)}$ be such that $\bar{\rho}(\tau_1)=\mtx{1}{b}{}{1}$, we find that \[\op{dim}_{\F_\ell}\left(E[\ell]/(\tau_1-1)E[\ell]\right)=1.\]
\end{enumerate}
\end{proof}

\section{Density results}\label{s 3}

\par Recall from Definition \ref{def T K L} that $\mathcal{T}_{K, \ell}$ is the set of elliptic curves $E_{/\Q}$ such that the following conditions are satisfied
\begin{enumerate}
    \item $\bar{\rho}_E'$ is surjective,
    \item $\tilde{K}(\mu_\ell)$ does not contain $\Q(\bar{\rho}_E')$. 
\end{enumerate}
According to Proposition \ref{T K l prop}, for every elliptic curve $E\in \cT_{K, \ell}$, $(E,K,\ell)$ satisfies (DS). We prove that the set of elliptic curves $\mathcal{T}_{K, \ell}$ has density $1$. Let $\mathcal{T}_\ell'$ be the set of elliptic curves $E_{/\Q}$ for which $\bar{\rho}_E'$ is surjective. We shall assume throughout this section that $\ell\geq 5$. Note that the elliptic curves $E\in \mathcal{T}_\ell'$ do not have complex multiplication. Duke \cite{duke1997elliptic} showed that the set of elliptic curves $E_{/\Q}$ for which $\bar{\rho}_E:\op{G}_{\Q}\rightarrow \op{GL}_2(\F_\ell)$ is surjective has density $1$.  In particular, it follows from Duke's result that the set $\mathcal{T}_\ell'$ has density $1$. The result proven in \emph{loc. cit.} is much stronger. A prime $\ell$ is an \emph{exceptional prime} of an elliptic curve $E_{/\Q}$ if the residual representation at $\ell$ is not surjective. It is proven that a density $1$ set of elliptic curves $E_{/\Q}$ have no exceptional primes. An elliptic curve $E_{/Q}$ is said to be a \emph{Serre curve} if the image of its associated adelic Galois representation has index $2$ in $\op{GL}_2(\widehat{\Z})$. Jones \cite{jones2010almost} proves that the set of Serre curves has density $1$.

\par Let $A_{/\Q}$ be an elliptic curve and let $\mathcal{T}_A$ be the set of elliptic curves $E_{/\Q}$ such that $\bar{\rho}_E'\simeq \bar{\rho}_A'$. Given an elliptic curve $E$, we write $N_E$ for its conductor. At a prime $p$, let $\op{G}_p$ denote the absolute Galois group $\op{Gal}(\bar{\Q}_p/\Q_p)$ and $\sigma_p\in \op{G}_p$ be a lift of the Frobenius element. For $p\nmid N_E\ell$, \[\op{trace}\bar{\rho}_E(\sigma_p) \equiv a_p(E)\pmod{\ell};\] set $t_p(E)\in \F_\ell$ to denote the trace of $\bar{\rho}_E(\sigma_p)$ modulo $\ell$. For $E\in \cT_A$, we find that \begin{equation}\label{t_p(A)=pm t_p(E)}t_p(E)=\pm t_p(A)\end{equation} for all primes $p\nmid N_EN_A\ell$.
\par Let $E_1$ and $E_2$ be elliptic curves over $\Q$ and let $(t_1, t_2, d)\in \F_\ell^3$ be such that $d\neq 0$. Let $\pi(X, d, \ell)$ be the number of prime numbers $p\leq X$ such that $p\equiv d\pmod{\ell}$, and set $\pi_{E_1, E_2}(X, t_1, t_2, d, \ell)$ to be the number of prime numbers $p\leq X$ such that 
\begin{enumerate}
    \item $p\nmid N_{E_1}N_{E_2}$, 
    \item $p\equiv d\pmod{\ell}$, 
    \item $t_p(E_i)=t_i$ for $i=1,2$.
\end{enumerate}

For $t\in \F_\ell$, set $\chi_{t,d,\ell}$ to denote the Kronecker symbol $\left(\frac{t^2-4d}{\ell}\right)$, set 
\[\delta(t,d, \ell) :=\left(\frac{\ell+\chi_{t,d,\ell}}{\ell^2-1}\right),\]
and set
\begin{equation}\label{def of delta}\delta=\delta(t_1, t_2, d, \ell):=\delta(t_1,d, \ell)\delta(t_2,d, \ell).\end{equation}

For an integer $n$, set $H(n)$ to denote the \emph{Hurwitz class number}. We refer to \cite[p. 816, l. 24]{duke1997elliptic} or \cite[p.293]{cox2022primes} for the definition. Let $E=E_{r,s}$ be the elliptic curve with minimal Weierstrass equation $y^2=x^3+rx+s$, and assume that this equation is minimal. Let $\Delta_E$ be the discriminant of $E$.  The Frobenius trace $a_p(E)$ depends only on $(r,s)$, and
\[a_p(E)=a_{r,s}(p):=-\sum_{x \in \F_p}\left(\frac{x^3+rx+s}{p}\right).\]Let $p\geq 5$ be a prime number. We recall a well known formula of Deuring for the number of elliptic curves over $\F_p$ which have a preassigned number of points.
\begin{theorem}[Deuring]\label{deuring thm}
    Let $p\geq 5$ be a prime and $N=p+1-a$ be an integer such that $|a|<2\sqrt{p}$. Then the number of Weierstrass equations for elliptic curves $E_{/\F_p}$ for which $\#E(\F_p)=N$ is $\left(\frac{p-1}{2}\right)H(4p-a^2)$. In other words, 
    \[\# \left\{(r,s)\in (\Z/p\Z)^2\mid 4r^3+27s^2\neq 0\text{ and }a_{r,s}(p)=a\right\}=\left(\frac{p-1}{2}\right)H(4p-a^2).\]
\end{theorem}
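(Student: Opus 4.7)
My plan is to follow the classical complex-multiplication argument of Deuring. For $p\geq 5$ and $|a|<2\sqrt{p}$ the discriminant $a^2-4p$ is negative, so the Frobenius endomorphism $\pi_E$ of any $E_{/\F_p}$ with $a_p(E)=a$ satisfies $\pi_E^2-a\pi_E+p=0$ and generates an imaginary quadratic order $\Z[\pi_E]$ inside $K=\Q(\sqrt{a^2-4p})$. The first step is to stratify the count by the endomorphism ring $\cO:=\op{End}(E)$, which must be an order of $K$ with $\Z[\pi_E]\subseteq \cO\subseteq\cO_K$; the admissible $\cO$ correspond to divisors of the conductor of $\Z[\pi_E]$ in $\cO_K$.

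For each such admissible order $\cO$, Deuring's lifting theorem together with the main theorem of complex multiplication gives a canonical simply transitive action of $\op{Pic}(\cO)$ on the set of $\overline{\F}_p$-isomorphism classes of elliptic curves with $\op{End}=\cO$. Each class over $\overline{\F}_p$ descends to a family of $\F_p$-forms which come in twist-pairs with Frobenius traces $\pm a$, and for generic $\cO$ (where $\cO^\times=\{\pm 1\}$) exactly one element of each pair has trace $+a$; hence the number of $\F_p$-isomorphism classes of curves with trace $a$ and $\op{End}=\cO$ equals $h(\cO)$, while the automorphism group of such a curve is $\cO^\times$.

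Finally, for a curve $E_{/\F_p}$ in short Weierstrass form $y^2=x^3+rx+s$, the equations $\F_p$-isomorphic to $E$ are the $\F_p^\times$-orbit of $(r,s)$ under $u\cdot(r,s)=(u^4 r, u^6 s)$, whose stabilizer is identified with $\op{Aut}(E)$, so each $\F_p$-isomorphism class contributes exactly $(p-1)/|\op{Aut}(E)|$ Weierstrass equations. Combining the three steps yields
\[\#\{(r,s)\in \F_p^2:\, a_{r,s}(p)=a\}=\sum_{\cO}\frac{(p-1)\,h(\cO)}{|\cO^\times|}=\frac{p-1}{2}\sum_{\cO}\frac{2\,h(\cO)}{|\cO^\times|}=\frac{p-1}{2}\,H(a^2-4p),\]
the last equality being exactly the defining expression of the Hurwitz class number.

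The most delicate step will be the bookkeeping at $j=0$ and $j=1728$: the Hurwitz weights $2/|\cO^\times|\in\{1/2,1/3\}$ are designed precisely to cancel the enlarged automorphism groups of order $4$ or $6$ at these special $j$-invariants and the correspondingly enlarged (quartic/sextic) twist groups. Verifying this compatibility together with the supersingular fiber $a=0$ is where one must be most careful; once these special cases are absorbed, the uniform factor $(p-1)/2$ emerges cleanly in front of $H(a^2-4p)$ and the formula follows.
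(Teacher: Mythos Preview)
The paper does not prove this statement at all: its entire proof is a citation to \cite[Theorem~14.18]{cox2022primes}. Your sketch is exactly the classical Deuring/CM argument that underlies the cited reference---stratify by the endomorphism order $\cO\supseteq\Z[\pi_E]$, count $\F_p$-isomorphism classes via $\op{Pic}(\cO)$ and the twist pairing, convert to Weierstrass equations via orbit--stabilizer for the $\F_p^\times$-action $(r,s)\mapsto(u^4r,u^6s)$, and recognise the resulting weighted sum $\sum_{\cO}2h(\cO)/|\cO^\times|$ as $H(a^2-4p)$---so in that sense your approach and the paper's (implicit) approach coincide. Your proposal in fact supplies substantially more detail than the paper does.

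One point worth flagging in your plan: the case $a=0$ is genuinely different in kind, not merely a bookkeeping subtlety like $j=0,1728$. For $a=0$ and $p\geq 5$ the curves are supersingular, $\op{End}(E)$ is a maximal order in the quaternion algebra ramified at $p$ and $\infty$ rather than an imaginary quadratic order, and the stratification ``$\Z[\pi_E]\subseteq\cO\subseteq\cO_K$'' you set up no longer applies literally. The formula still holds because the Deuring mass formula for supersingular curves matches the value $H(-4p)$, but this requires a separate argument (Eichler's mass formula or a direct count of supersingular $j$-invariants with weights), not just a special-case check inside your ordinary-curve framework. You allude to this, but your write-up should treat it as a parallel computation rather than a boundary case of the CM argument.
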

\begin{proof}
    We refer to \cite[Theorem 14.18]{cox2022primes} for a proof of the result.
\end{proof}
Recall from the previous section that $\mathcal{C}$ is the set of all isomorphism classes of elliptic curves over $\Q$ and $\mathcal{C}(X)$ those with height $\leq X^6$. Denote by $\mathcal{C}(X)^2$ the Cartesian product $\mathcal{C}(X)\times \mathcal{C}(X)$ consisting of all pairs of elliptic curves $(E_1, E_2)$ defined over $\Q$. Let $f(X)$ and $g(X)$ be functions of $X$ taking on postive values. We write $f(X)\ll g(X)$, or equivalently, $f(X)=O(g(X))$, to mean that there exists a constant $C>0$, independent of $X$, such that $f(X)\leq C g(X)$. 
\begin{proposition}\label{main prop 3.1}
    Let $(t_1, t_2, d)\in \F_\ell^3$ be such that $d\neq 0$. Then, with respect to notation above, 
    \[\frac{1}{\# \mathcal{C}(X)^2}\sum_{(E_1, E_2)\in \mathcal{C}(X)^2}\left(\pi_{E_1, E_2}(X, t_1, t_2, d, \ell)-\delta\pi(X, d, \ell)\right)^2\ll X.\]  
\end{proposition}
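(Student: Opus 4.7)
The approach is a second-moment variance estimate in the style of Duke~\cite{duke1997elliptic}, leveraging the product structure of $\mathcal{C}(X)^2$.

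\textbf{Step 1 (expand and factor).} Set $M:=\#\mathcal{C}(X)$, $\delta_i:=\delta(t_i,d,\ell)$, $\pi_d(X):=\pi(X,d,\ell)$, and
\[T_i(p):=\#\{E\in \mathcal{C}(X): p\nmid N_E,\ t_p(E)=t_i\},\quad T_i(p,q):=\#\{E\in \mathcal{C}(X): pq\nmid N_E,\ t_p(E)=t_q(E)=t_i\}.\]
Since $E_1$ and $E_2$ vary independently, a direct expansion of the square gives
\[\sum_{(E_1,E_2)}(\pi_{E_1,E_2}-\delta\pi_d(X))^2 = \sum_{p,q} T_1(p,q)T_2(p,q) - 2\delta_1\delta_2\pi_d(X)\sum_p T_1(p)T_2(p) + \delta_1^2\delta_2^2\pi_d(X)^2 M^2,\]
where $p,q$ range over primes $\leq X$ with $p,q\equiv d\pmod\ell$, up to $O(M^2 X)$ corrections from bad-reduction primes (controlled by $\sum_{E\in \mathcal{C}(X)}\omega(N_E)^2 \ll M(\log X)^2$).

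\textbf{Step 2 (local asymptotics).} Using the Chinese Remainder Theorem to reduce Weierstrass pairs $(A,B)\in [-X^2,X^2]\times [-X^3,X^3]$ modulo $p$ (resp.\ modulo $pq$), and invoking Deuring's formula (Theorem~\ref{deuring thm}) for the count of Weierstrass equations over $\F_p$ with prescribed Frobenius trace, I would establish
\[T_i(p)=\delta_i M+E_i(p),\qquad T_i(p,q)=\delta_i^2 M+E_i(p,q)\quad (p\neq q),\]
where the density $\delta_i$ emerges from averaging the Kronecker symbol $\chi_{a,d,\ell}$ over $a\equiv t_i\pmod\ell$ (using $p\equiv d\pmod\ell$), together with the Eichler–Selberg asymptotic for $\sum_{|a|<2\sqrt{p}}H(a^2-4p)$. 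Crucially, the CRT factors the mod-$pq$ count as a product of independent mod-$p$ and mod-$q$ counts, so the bivariate analysis decouples into two univariate problems.

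\textbf{Step 3 (cancellation and bounds).} Substituting Step 2 into Step 1, the three main terms of order $\delta_1^2\delta_2^2 M^2\pi_d(X)^2$ cancel. The residue consists of the diagonal $p=q$, bounded by $\sum_p T_1(p)T_2(p)\ll M^2\pi_d(X)\ll M^2 X/\log X$, together with off-diagonal cross terms of the form $M\cdot E_i(p,q)$, $M\pi_d(X)\cdot E_i(p)$, and $E_i(p)E_j(q)$. These are controlled by Cauchy–Schwarz and Duke's variance bound for the single-curve errors $\sum_p E_i(p)^2\ll M^2 X$, together with its CRT-separated bivariate analog, giving a total of $O(M^2 X)$. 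Dividing by $M^2$ yields the claim.

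\textbf{Main obstacle.} The principal technical difficulty is the uniform control of the bivariate error $E_i(p,q)$ for primes $p,q$ up to $X$, including in the ranges where $pq$ is comparable to the ambient box size $X^5$ and the CRT lattice approximation saturates. This requires tracking Hurwitz class number fluctuations in arithmetic progressions mod $\ell$ via the Eichler–Selberg trace formula, as in the analysis of \cite[\S 3--4]{duke1997elliptic}, applied in each CRT factor separately. The essential simplification from the product structure of $\mathcal{C}(X)^2$ is that the independence of $E_1$ and $E_2$ means no genuinely bivariate class-number estimate is required: Duke's single-variable bounds suffice once plugged into each CRT factor.
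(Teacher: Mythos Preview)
Your direct second-moment expansion is a different route from the paper's. The paper applies the large sieve inequality in dimension four: it sets $\Omega(p)\subset(\Z/p\Z)^4$ to be the quadruples $(r_1,s_1,r_2,s_2)$ with $4r_i^3+27s_i^2\neq 0$ and $a_{r_i,s_i}(p)\equiv t_i\pmod\ell$ (empty unless $p\equiv d\pmod\ell$), computes $\#\Omega(p)=\delta p^4+O(\ell p^{7/2})$ from Deuring's theorem and Duke's class-number estimate, so that $P(X):=\sum_{p\le X}\#\Omega(p)p^{-4}=\delta\,\pi(X,d,\ell)+O(X^{1/2})$, and then invokes the sieve bound $\sum_{m\in\cB\cap\Z^4}(P(X;m)-P(X))^2\ll V(\cB)P(X)$ on the box $\cB=\{|r_i|\le X^2,\ |s_i|\le X^3\}$. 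Dividing by $\#\mathcal{C}(X)^2\asymp X^{10}$ finishes the argument in one stroke, with no term-by-term bookkeeping.

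Your Step~3 has a gap. After the leading $\delta^2M^2\pi_d(X)^2$ terms cancel, the individual cross term $\delta_j^2 M\sum_{p,q}E_i(p,q)$ is \emph{not} $O(M^2X)$: writing $\alpha_i(p):=\#\Omega_i(p)/p^2=\delta_i+O(p^{-1/2})$ and $B_i:=\sum_{p}(\alpha_i(p)-\delta_i)=O(X^{1/2})$, the density contribution to $\sum_{p,q}E_i(p,q)$ is $M\bigl(2\delta_i\pi_d(X)B_i+B_i^2\bigr)$, so this cross term alone is of order $M^2X^{3/2}/\log X$. What actually happens is that, modulo diagonal and lattice contributions, the whole expression collapses to the perfect square $M^2\bigl(\sum_p\alpha_1(p)\alpha_2(p)-\delta\,\pi_d(X)\bigr)^2=O(M^2X)$; your ``Cauchy--Schwarz plus Duke's variance'' does not detect this second cancellation. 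A parallel problem arises for the lattice-point errors: the crude bound on the lattice error in $T_i(p,q)$ is $O(pqX^3)$, whose contribution to $M\sum_{p,q}E_i(p,q)$ is of order $X^{12}/(\log X)^2\gg M^2X$, and taming this over the full range $p,q\le X$ (where $pq$ reaches the box width $X^2$) is exactly the equidistribution content packaged by the large sieve, not by the Eichler--Selberg input you cite. Your outline is thus repairable for the density errors via the perfect-square regrouping, but for the lattice errors near saturation it would essentially need to redevelop the sieve inequality that the paper simply quotes.
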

\begin{proof}
    The proof of this result follows from a similar argument to that of \cite[Theorem 2]{duke1997elliptic}. For each prime $p$, let $\Omega(p)$ be a prescribed subset of $(\Z/p\Z)^n$. For $m\in \Z^n$, set
\[P(X;m):=\#\{p\leq X\mid m\!\!\mod{p}\in \Omega(p)\},\]
\[P(X)=\sum_{p\leq X} \#\Omega(p) p^{-n}.\]
Let $\cB$ be a box in $\mathbb{R}^n$ whose sides are parallel to the coordinate planes which has minimum width
$W(\cB)$ and volume $V(\cB)$. Then, the large sieve inequality states that if $W(\cB)\geq X^2$,  then
    \begin{equation}\label{large sieve ineq}\sum_{\cB\cap \Z^n} (P(X;m)-P(X))^2\ll V(\cB)P(X).\end{equation}
    We refer to the proof of \cite[Lemma 1]{duke1997elliptic} for the standard references on this theme.

    \par Taking $n=4$, let us define $\Omega(p)$ to be empty if $p=2,3$ or $p\not \equiv d\mod{\ell}$. For $p>3$ such that $p\equiv d\mod{\ell}$, set 
\[\Omega(p):=\{(r_1, s_1, r_2, s_2)\in (\Z/p\Z)^4\mid 4r_i^3+27s_i^2\neq 0\text{ and 
 } a_{r_i, s_i}(p)\equiv t_i\mod{\ell}\text{ for }i=1,2\}.\]
 For primes $p>3$, with $p\equiv d\mod{\ell}$, it follows from Theorem \ref{deuring thm} (or \cite[Lemma 2]{duke1997elliptic}) that
 \[\begin{split} &\# \{(r_1, s_1, r_2, s_2)\in (\Z/p\Z)^4\mid 4r_i^3+27s_i^2\neq 0, a_{r_i, s_i}(p)=a_i\text{ for }i=1,2\} \\ =& \left(\frac{p-1}{2}\right)^2 H(4p-a_1^2)H(4p-a_2^2).\end{split}\]
 We find that 
 \begin{equation}\label{H estimate delta}\begin{split}\#\Omega(p)=& \left(\frac{p-1}{2}\right)^2\prod_{i=1}^2\left(\sum_{a_i\equiv t_i\mod{\ell}}H(4p-a_i^2)\right).\\
 =& \delta p^4+O(\ell p^{7/2}),
 \end{split}\end{equation}
 where the second equality follows from \cite[Lemma 3]{duke1997elliptic}, which states that
 \[\left(\sum_{a_i\equiv t_i\mod{\ell}}H(4p-a_i^2)\right)=2\delta(t_i, d, \ell)p+O(\ell p^{1/2}).\] 
 Recall that $P(X)=\sum_{p\leq X} \#\Omega(p) p^{-4}$, and therefore, from the estimate \eqref{H estimate delta}, we find that 
 \begin{equation}\label{est 1}\begin{split}
 P(X)= & \sum_{p\equiv d \mod{\ell}; p\leq X} \delta + O\left(\ell \sum_{p\equiv d \mod{\ell}; p\leq X} p^{-1/2}\right),\\
 =& \delta \pi(X,d,\ell)+O(X^{1/2}).
 \end{split}\end{equation}
 The second estimate is the same as that of \cite[p.817, l.\ 4]{duke1997elliptic}.
Let $(E_1, E_2)\in \cC(X)\times \cC(X)$, with minimal Weierstrass equation 
\[E_i: y^2=x^3+r_i x +s_i.\]
Then, it is easy to see that 
\begin{equation}\label{est 2}P(X; (r_1, s_1, r_2, s_2))=\pi_{E_1, E_2} (X, t_1, t_2, d, \ell)+O(\op{log} X).\end{equation}
 We take 
 \[\cB:=\{(r_1, s_1, r_2, s_2)\in \mathbb{R}^4\mid |r_i|\leq X^2\text{ and } |s_i|\leq X^3\},\]
 and from \eqref{large sieve ineq}, \eqref{est 1} and \eqref{est 2}, we deduce that
 \[\begin{split}& \frac{1}{\# \mathcal{C}(X)^2}\sum_{E_1\in \mathcal{C}(X)}\sum_{E_2\in \mathcal{C}(X)}\left(\pi_{E_1, E_2}(X, t_1, t_2, d, \ell)-\delta\pi(X, d, \ell)\right)^2 
 \\ 
 \ll & \frac{1}{X^{10}}\sum_{\cB\cap \Z^n} (P(X;m)-P(X))^2 
 \ll P(X) 
 \ll  X.\end{split}\] 
 This proves the result.

\end{proof}

\begin{proposition}\label{T_A density 0}
    Let $A_{/\Q}$ be an elliptic curve, then, 
    \[\frac{\#\mathcal{T}_A(X)}{\# \mathcal{C}(X)}=O\left(\frac{\op{log}(X)}{\sqrt{X}}\right).\] In particular, the set $\cT_A$ has density $0$ in $\mathcal{C}$.
\end{proposition}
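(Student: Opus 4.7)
The plan is to apply Proposition \ref{main prop 3.1} directly, exploiting that ``$E \in \cT_A$'' defines an equivalence relation on elliptic curves: if $E_1, E_2 \in \cT_A$ then $\bar\rho_{E_1}' \simeq \bar\rho_{E_2}'$, so applying \eqref{t_p(A)=pm t_p(E)} to the pair $(E_1, E_2)$ in place of $(A, E)$ yields
\[
a_p(E_1) \equiv \pm a_p(E_2) \pmod \ell
\]
for every prime $p \nmid N_A N_{E_1} N_{E_2} \ell$.

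First I would fix a triple $(t_1, t_2, d) \in \F_\ell^3$ with $d \neq 0$ and $t_2 \not\equiv \pm t_1 \pmod \ell$; such a triple exists because $\ell \geq 5$, and for any such choice $\delta = \delta(t_1, t_2, d, \ell)$ in \eqref{def of delta} is strictly positive (since $\delta(t, d, \ell) > 0$ for all $t \in \F_\ell$ whenever $d \neq 0$), giving $\delta\, \pi(X, d, \ell) \gg X/\log X$ by the prime number theorem in arithmetic progressions. Then for every pair $(E_1, E_2) \in \cT_A(X)^2$, the $\pm$-relation displayed above combined with $t_2 \not\equiv \pm t_1$ forces
\[
\pi_{E_1, E_2}(X, t_1, t_2, d, \ell) = O_A(1),
\]
since the only primes in the definition of $\pi_{E_1, E_2}$ that can escape the $\pm$-constraint are the $O_A(1)$ primes dividing $N_A \ell$; primes of bad reduction for $E_1$ or $E_2$ are already excluded by condition (1) of that definition. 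Consequently
\[
\bigl(\pi_{E_1, E_2}(X, t_1, t_2, d, \ell) - \delta\, \pi(X, d, \ell)\bigr)^2 \gg \left(\frac{X}{\log X}\right)^2
\]
uniformly in $(E_1, E_2) \in \cT_A(X)^2$.

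The final step is to restrict the sum in Proposition \ref{main prop 3.1} to the sub-collection $\cT_A(X)^2 \subset \cC(X)^2$ and discard the remaining nonnegative contributions. This gives
\[
\#\cT_A(X)^2 \cdot \left(\frac{X}{\log X}\right)^2 \ll X \cdot \#\cC(X)^2,
\]
which rearranges to $\#\cT_A(X)/\#\cC(X) \ll \log X/\sqrt X$, proving the claim; density $0$ follows immediately.

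The step requiring the most care is the uniform squared-deviation lower bound over all $(E_1, E_2) \in \cT_A(X)^2$, and in particular the observation that the bad-prime term depends only on $A$, not on $E_1$ or $E_2$, so that it is swallowed by the main term $\delta\, \pi(X, d, \ell) \gg X/\log X$. Once this is in place, invoking Proposition \ref{main prop 3.1} is a single line, and no further input is needed.
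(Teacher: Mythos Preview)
Your proposal is correct and follows essentially the same approach as the paper: choose $(t_1,t_2,d)$ with $t_1\neq\pm t_2$, observe that for $(E_1,E_2)\in\cT_A(X)^2$ the count $\pi_{E_1,E_2}(X,t_1,t_2,d,\ell)$ is negligible, and feed this into Proposition~\ref{main prop 3.1}. One small remark: your $O_A(1)$ caution is in fact unnecessary, since you are applying \eqref{t_p(A)=pm t_p(E)} directly between $E_1$ and $E_2$; the only bad primes for that relation divide $N_{E_1}N_{E_2}\ell$, and these are already excluded by conditions (1) and (2) in the definition of $\pi_{E_1,E_2}$, so the count is exactly $0$ as the paper states.
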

\begin{proof}
    We choose a triple $w=(a,b, d)\in \F_\ell^3$ such that $a\neq \pm b$ and $d\neq 0$. For any pair $(E_1,E_2)\in \mathcal{T}_A\times \mathcal{T}_A$, we find that \eqref{t_p(A)=pm t_p(E)} implies that the relation $t_p(E_1)=\pm t_p(E_2)$ holds. In particular, $(t_p(E_1), t_p(E_2))\neq (a,b)$. In particular, this implies that for $(E_1, E_2)\in \cT_A\times \cT_A$, 
    \[\pi_{E_1, E_2}(X, a, b, d,\ell)=0.\]
\par Invoking Proposition \ref{main prop 3.1}, we find that 
 \[\frac{1}{\# \mathcal{C}(X)^2}\sum_{E_1\in \mathcal{C}(X)}\sum_{E_2\in \mathcal{C}(X)}\left(\pi_{E_1, E_2}(X, a, b, d,\ell)-\delta\pi(X, d, \ell)\right)^2\leq CX.\]
 In particular, this implies that 
 \[\frac{1}{\# \mathcal{C}(X)^2}\sum_{E_1\in \mathcal{T}_A(X)}\sum_{E_2\in \mathcal{T}_A(X)}\left(\pi_{E_1, E_2}(X, a, b, d,\ell)-\delta\pi(X, d, \ell)\right)^2\leq CX.\]
 Since $\pi_{E_1, E_2}(X, a, b, d,\ell)=0$ for $(E_1, E_2)\in \cT_A\times \cT_A$, we find that 
  \[\left(\frac{\#\mathcal{T}_A(X)}{\# \mathcal{C}(X)}\right)^2\leq \frac{C}{\delta}\frac{X}{\pi(X,d,\ell)^2}=O\left(\frac{\op{log}(X)^2}{X}\right),\] and thus, 
   \[\frac{\#\mathcal{T}_A(X)}{\# \mathcal{C}(X)}=O\left(\frac{\op{log}(X)}{\sqrt{X}}\right).\]
   Since $\lim_{X\rightarrow \infty}\frac{\op{log}(X)}{\sqrt{X}}=0$, the result follows.
\end{proof}

\begin{proof}[Proof of Theorem \ref{main thm 1}/ Theorem \ref{our main result}]
    According to Proposition \ref{T K l prop}, every elliptic curve $E\in \cT_{K, \ell}$, $(E, K, \ell)$ satisfies (DS) and the conditions of Theorem \ref{mazur rubin criterion} are satisfied. Thus, it suffices to show that $\cT_{K, \ell}$ has density $1$. Let $L$ denote $\tilde{K}(\mu_\ell)$ and $\mathcal{D}_{K, \ell}$ be the set of elliptic curves $E_{/\Q}$ such that 
    \begin{enumerate}
    \item $\bar{\rho}_E'$ is surjective, 
    \item $\tilde{K}(\mu_\ell)$ contains $\Q(\bar{\rho}_E')$.
    \end{enumerate}

    Note that $\cT_{K, \ell}\cup \mathcal{D}_{K, \ell}$ consists of all elliptic curves $E_{/\Q}$ for which $\bar{\rho}_E'$ is surjective. By the main result of \cite{duke1997elliptic} discussed at the start of this section, their union has density $1$. Therefore, it suffices to show that $\mathcal{D}_{K, \ell}$ has density $0$. Let $F$ be a subfield of $L$ and $\mathcal{D}_F$ be the set of elliptic curves $E_{/\Q}$ such that
    \begin{enumerate}
    \item $\bar{\rho}_E'$ is surjective, 
    \item $\Q(\bar{\rho}_E')=F$.
    \end{enumerate}
    There are finitely many number fields $F$ contained in $L$. Therefore, it suffices to show that $\mathcal{D}_F$ has density $0$. Without loss of generality, we may assume that $\mathcal{D}_F$ is nonempty. Suppose that $A_1, A_2\in \mathcal{D}_F$. Then, since 
    \[\Q(\bar{\rho}_{A_1}')= \Q(\bar{\rho}_{A_2}')=F\]
    it follows that
    $\bar{\rho}_{A_1}'=\eta\circ \bar{\rho}_{A_2}'$, where $\eta\in \op{Aut}\left(\op{GL}_2'(\F_\ell)\right)$. Suppose that $A_2$ and $A_3$ are elliptic curves such that $\bar{\rho}_{A_1}'=\eta_i\circ \bar{\rho}_{A_i}'$ for $i=2,3$, where $\eta_i\in \op{Aut}\left(\op{GL}_2'(\F_\ell)\right)$. If $\eta_2=\eta_3$, then, $\bar{\rho}_{A_2}'\simeq \bar{\rho}_{A_3}'$. Note that the group $\op{Aut}\left(\op{GL}_2'(\F_\ell)\right)$ is finite. Therefore, we find that $\mathcal{D}_F$ is contained in a finite union $\bigcup_{i=1}^n \mathcal{T}_{A_i}$. Proposition \ref{T_A density 0} asserts that $\cT_{A}$ has density $0$ for any elliptic curve $A_{/\Q}$. Being a finite union of density $0$ sets, it follows that $\mathcal{D}_F$ has density $0$ as well. Since $\mathcal{D}_{K, \ell}$ is a finite union of sets of the form $\mathcal{D}_{F}$, it follows that $\mathcal{D}_{K, \ell}$ has density $0$, and therefore, $\cT_{K,\ell}$ has density $1$. This proves the result.
\end{proof}

\begin{proof}[Proof of Theorem \ref{main thm pos density}]
    The result is an immediate consequence of Theorem \ref{main thm 2.6}. 
\end{proof}
\section{Hilbert's Tenth Problem}\label{s 4}
\par In this section, we prove Theorem \ref{main thm 2}. Let us first recall the notion of an integrally diophantine extension of number fields, in the sense of \cite[Section 1.2]{shlapentokh2007hilbert}.
Let $A$ be a commutative ring with identity, and $A^n$ be the free $A$-module of rank $n$, consisting of tuples $a=(a_1, \dots, a_n)$ with entries in $A$. Let $m$ and $n$ be positive integers and let $a=(a_1, \dots, a_n)\in A^n$ and $b=(b_1, \dots, b_m)\in A^m$. Denote by $(a,b)\in A^{n+m}$ the tuple $(a_1, \dots, a_n, b_1, \dots, b_m)$. Given a finite set of polynomials, $F_1,\dots, F_k$, we set 
\[\cF(a; F_1, \dots, F_k):=\{b\in A^m\mid F_i(a,b)=0\text{ for all }i=1,\dots, k\}.\]
\begin{definition}\label{diophantine subset}A subset $S$ of $A^n$ is a \emph{diophantine subset} of $A^n$ if for some $m\geq 1$, there are polynomials $F_1, \dots, F_k\in A[x_1, \dots, x_n, y_1, \dots, y_m]$ such that $S$ consists of all $a\in A^n$ for which the set $\cF(a; F_1, \dots, F_k)$ is nonempty.
\end{definition}

\begin{definition}
An extension of number fields $L/K$ is said to be \emph{integrally diophantine} if $\cO_K$ is a diophantine subset of $\cO_L$.
\end{definition}
Let $L', L$ and $K$ be number fields such that 
\[L'\supseteq L\supseteq K.\]Suppose that $L'/L$ and $L/K$ are integrally diophantine extensions. Then, it is a well known fact that $L'/K$ is an integrally diophantine extension. Indeed, this is a special case of \cite[Theorem 2.1.15]{shlapentokh2007hilbert}.
\par We recall a conjecture of Denef and Lipchitz \cite{denef1978diophantine}.
\begin{conjecture}[Denef-Lipchitz]\label{denef lipchitz conjecture}
For any number field $L$, $L/\Q$ is an integrally diophantine extension. 
\end{conjecture}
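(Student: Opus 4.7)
This is a longstanding open conjecture, so what follows is a plan of attack built from the existing partial results rather than a genuine proof sketch. The central organizational tool is the transitivity principle recorded just before the statement: if $\Q=K_0\subset K_1\subset\cdots\subset K_n=L$ is a tower of number fields in which every consecutive extension is integrally diophantine, then $L/\Q$ is integrally diophantine. The plan is therefore to reduce the conjecture to the step-by-step construction of such a tower for an arbitrary $L$, so that the conjecture becomes a purely inductive statement about one-step extensions.

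Known unconditional cases handle the base and several small instances --- Denef for totally real $L$, Denef--Lipschitz for $L$ with a unique complex place, Shlapentokh and later authors for various mixed-signature situations --- so I would focus entirely on the inductive step. At each stage $K_{i+1}/K_i$, the Shlapentokh--Poonen rank criterion reduces the task to producing an elliptic curve $E/K_i$ with $\rank E(K_i)=\rank E(K_{i+1})\geq 1$; such a curve yields a diophantine definition of $\cO_{K_i}$ inside $\cO_{K_{i+1}}$. The main source of such curves is diophantine stability: for cyclic $\ell$-power extensions with $\ell\geq 5$, Theorem~\ref{our main result} shows that the density-$1$ family $\mathcal{T}_{K_i,\ell}$ of curves over $\Q$ is diophantine stable at $K_{i+1}$, and under finiteness of $\Sh$ a positive upper density of these curves has rank $1$ over $\Q$ (Theorem~\ref{main thm 2.6}). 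Iterating one-step applications of Shlapentokh's criterion in this manner would then construct the tower up through any extension reachable from $\Q$ by a chain of cyclic $\ell$-power extensions with appropriate splitting behaviour.

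The main obstacle --- and the reason the conjecture is still open --- is that an arbitrary $L/\Q$ need not admit a tower whose consecutive layers are cyclic of $\ell$-power degree, or of any type accessible to the present elliptic-curve method; one would have to couple the paper's construction with a mechanism that \emph{prescribes} the top field, not merely \emph{produces} many good top fields. A secondary obstacle is the conditional nature of the framework: without finiteness of $\Sh$ (or a comparable Iwasawa-theoretic input) one cannot guarantee the existence of rank-$1$ elliptic curves that remain rank $1$ under ascent. A complete proof would plausibly require either an unconditional rank-control mechanism for ascent to arbitrary extensions --- perhaps through Heegner point constructions tailored to the specific field $L$, or norm-relation arguments in the spirit of Kolyvagin and Bertolini--Darmon --- or an entirely new diophantine construction that bypasses the elliptic-curve input, for instance via Jacobians of higher genus, Shimura varieties, or an extension of the first-order definability techniques of Koenigsmann for $\Q$.
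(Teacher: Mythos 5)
This statement is the Denef--Lipschitz conjecture, which the paper records as an open conjecture and does not prove; there is no proof in the paper to compare against, and your assessment that it is open is correct. Your outline is consistent with how the paper actually uses the surrounding machinery: the transitivity of integrally diophantine extensions plus Shlapentokh's criterion (Theorem \ref{shlap 1}) combined with diophantine stability (Theorems \ref{our main result} and \ref{main thm 2.6}) yields, conditionally on finiteness of $\Sh$, new integrally diophantine extensions $L/K$ that are cyclic of degree $\ell^n$ over a base $K$ already known to satisfy the conjecture --- this is exactly Theorem \ref{main thm 2} --- but neither this nor your proposed tower argument settles the conjecture for an arbitrary prescribed $L$, as you correctly note. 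No correction is needed beyond emphasizing that nothing here should be presented as a proof of the conjecture itself.
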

The celebrated result of Matiyasevich proves that Hilbert's Tenth Problem has a negative answer over $\Z$. If $L$ is a number field for which Conjecture \ref{denef lipchitz conjecture} holds, then Hilbert's Tenth Problem has a negative answer for $\cO_L$.  The Conjecture \ref{denef lipchitz conjecture} above
is known to hold for various families of number fields $L$.
\begin{itemize}
    \item The conjecture holds when $L$ is either totally real or a quadratic extension of a totally real number field (cf. \cite{denef1980diophantine,denef1978diophantine}), 
    \item $L$ has exactly one complex place (cf. \cite{pheidas1988hilbert, shlapentokh1989extension, videla16decimo}),
    \item $L/\Q$ is abelian (cf. \cite{shapiro1989diophantine}),
    \item $[L:\Q]=4$, $L$ is not totally real and $L/\Q$ has a proper intermediate field (cf. \cite{denef1978diophantine}).
\end{itemize}

\par Poonen \cite{poonen2002using}, Cornelissen-Pheidas-Zahidi \cite{cornelissen2005division} and Shlapentokh \cite{shlapentokh2008elliptic} subsequently established new criteria for Conjecture \ref{denef lipchitz conjecture} to hold, stated in terms of the stability of the Mordell-Weil rank of an elliptic curve. The following criterion of Shlapentokh relates diophantine stability for positive rank elliptic curves to Hilbert's Tenth Problem for number rings.
\begin{theorem}[Shlapentokh]\label{shlap 1}
Let $L/K$ be an extension of number fields and suppose that there exists an elliptic curve $E_{/K}$ such that $\op{rank}E(L)=\op{rank}E(K)>0$. Then, $L/K$ is integrally diophantine. In particular, if Hilbert's Tenth Problem has a negative answer for $\cO_K$, then, Hilbert's Tenth Problem has a negative answer for $\cO_L$. 
\end{theorem}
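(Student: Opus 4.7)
The approach is to use the elliptic curve $E$ as a bridge between $\cO_K$ and $\cO_L$, leveraging the rank hypothesis to keep $K$-rational structure accessible through $L$-rational points. First, observe that since $E(K)$ and $E(L)$ are finitely generated abelian groups of the same rank $r \geq 1$, the quotient $E(L)/E(K)$ is a finite torsion group; let $m_0 := \#\bigl(E(L)/E(K)\bigr)$, so $m_0 \cdot E(L) \subseteq E(K)$. Fix a point $P \in E(K)$ of infinite order (which exists because $\op{rank} E(K) \geq 1$).

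Next, I would exploit the arithmetic of $E$. Writing $E$ in Weierstrass form $y^2 = x^3 + Ax + B$ with $A, B \in \cO_K$, the $x$-coordinates $x_n := x(nP) \in K$ for nonzero $n \in \Z$ form a sequence whose denominators define an elliptic divisibility sequence. Because the group law on $E$ is given by rational functions defined over $K$, polynomial relations among coordinates of points of $E$ are automatically diophantine over $\cO_L$; moreover, at primes $v$ of good reduction, the $v$-adic valuations of $x_n$ obey strong divisibility patterns that allow one to test local integrality at $v$ via the existence of $E$-rational points satisfying explicit polynomial constraints. Together with the bridge $m_0 \cdot E(L) \subseteq E(K)$ --- which forces the coordinates of $m_0 Q$ to lie in $K$ for every $Q \in E(L)$ --- this allows one to write down polynomials $F_1, \dots, F_k \in \cO_L[t, y_1, \dots, y_m]$ such that $a \in \cO_L$ lies in $\cO_K$ if and only if $F_i(a, y_1, \dots, y_m) = 0$ has a solution $(y_1, \dots, y_m) \in \cO_L^m$, where the auxiliary variables $y_j$ parametrize elliptic curve data (points $Q$, their multiples $m_0 Q$, and local certificates) that witness the $K$-rationality of $a$. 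This realizes $\cO_K$ as a diophantine subset of $\cO_L$ in the sense of Definition \ref{diophantine subset}.

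The main obstacle is the explicit polynomial encoding: one must assemble the local integrality conditions furnished by the elliptic divisibility sequence attached to $P$ into a single diophantine formula, and check that its solution set in $\cO_L$ is \emph{exactly} $\cO_K$ rather than a strictly larger subring of $\cO_L$. This step is where the full strength of the hypothesis $\op{rank} E(L) = \op{rank} E(K)$ (as opposed to a mere inclusion) is consumed, via the finiteness of $m_0$ and the consequent rigidity of the Mordell--Weil groups. Once $\cO_K$ has been exhibited as a diophantine subset of $\cO_L$, the Hilbert's tenth problem consequence is formal: a polynomial system over $\cO_K$ whose solvability in $\cO_K$ is undecidable can be rewritten as a polynomial system over $\cO_L$ by substituting the diophantine definition of $\cO_K$ for each variable, so any decision procedure for Hilbert's tenth problem over $\cO_L$ would descend to one over $\cO_K$, contradicting the hypothesis.
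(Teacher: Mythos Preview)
The paper does not prove this theorem at all: it is stated as a result of Shlapentokh and simply cited (with reference to \cite{shlapentokh2007hilbert}), then immediately invoked in the proof of Theorem \ref{main thm 2}. So there is no ``paper's own proof'' to compare against.

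As for your sketch on its own merits: the opening observations are correct and standard --- finiteness of $E(L)/E(K)$, the existence of $m_0$ with $m_0\cdot E(L)\subseteq E(K)$, and the relevance of elliptic divisibility sequences attached to a point of infinite order. These are indeed the ingredients Shlapentokh uses. However, what you have written is not a proof but a description of the landscape. The entire content of the theorem lies in the paragraph you flag as ``the main obstacle,'' and you do not actually carry it out: you assert that the local integrality conditions can be ``assembled into a single diophantine formula'' whose solution set is \emph{exactly} $\cO_K$, but you give no construction and no argument for either containment. In Shlapentokh's work this is done via her weak vertical method and bounded-norm techniques, and it occupies many pages of delicate estimates on valuations along the divisibility sequence; none of that is present here. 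So as a proof this has a genuine gap --- the central step is stated as a desideratum rather than established --- though as a citation-with-orientation it would be acceptable, which is effectively what the paper itself does.
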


\begin{proof}[Proof of Theorem \ref{main thm 2}]
    By Theorem \ref{main thm pos density}, there exists an elliptic curve $E_{/\Q}$ with positive rank which is diophantine stable at $\ell$. The result is then an immediate consequence of Theorem \ref{shlap 1}.
\end{proof}

\bibliographystyle{alpha}
\bibliography{references}
\end{document}